\newcommand{\p}{\partial}
\newcommand{\dif}{\,\mathrm{d}}
\newtheorem{thm}{Theorem}[section]
\def \bes{\begin{eqnarray}}
\def \ees{\end{eqnarray}}
\def \bns{\begin{eqnarray*}}
\def \ens{\end{eqnarray*}}
\newenvironment{eqa}{\begin{equation}%
  \begin{array}{rcl}}{\end{array}\end{equation}}
\newcommand\beqa{\begin{eqa}}
\newcommand\eeqa{\end{eqa}}
\newcommand{\re}[1]{\mbox{$($\ref{#1}$)$}}\baselineskip 1pt
\begin{document}
\bibliographystyle{plain}
\numberwithin{equation}{section}

\title{Optimal control of treatment in a free boundary problem modeling multilayered tumor growth}

\author{Xinyue Evelyn Zhao}  
\address{Department of Mathematics, University of Tennessee, Knoxville, TN 37996, USA} 
\email{xzhao45@utk.edu}

\author{Yixiang Wu}
\address{Department of Mathematical Sciences, Middle Tennessee State University, TN 37132, USA}
\email{yixiang.wu@mtsu.edu}

\author{Rachel Leander}
\address{Department of Mathematical Sciences, Middle Tennessee State University, TN 37132, USA}
\email{rachel.leander@mtsu.edu}

\author{Wandi Ding}
\address{Department of Mathematical Sciences, Middle Tennessee State University, TN 37132, USA}
\email{wandi.ding@mtsu.edu}

\author{Suzanne Lenhart }
\address{Department of Mathematics, University of Tennessee, Knoxville, TN 37996, USA} 
\email{slenhart@utk.edu}

\begin{abstract}
We study the optimal control problem of a free boundary PDE model describing the growth of multilayered tumor tissue in vitro. We seek the optimal amount of tumor growth inhibitor that simultaneously minimizes the thickness of the tumor tissue and mitigates side effects. The existence of an optimal control is established, and the uniqueness and characterization of the optimal control are investigated. Numerical simulations are presented for some scenarios, including the steady-state  and parabolic cases.

\vspace{0.1in}
\noindent Key Words: Optimal control, reaction-diffusion, free boundary, tumor growth\\
AMS subject classifications: 49K20, 35K20, 35R35, 35Q92, 35Q93
\end{abstract}

\maketitle

\section{Introduction}
Over the past few decades, mathematical models that describe solid tumor growth as free boundary problems have been proposed and studied; for a comprehensive review, see \cite{bazaliy2003global,bazaliy2003free,cui2009well,fontelos2003symmetry,friedman2007mathematical,friedman2008multiscale,friedman2009free,friedman2006bifurcation,friedman2006asymptotic,friedman2008stability,friedman1999analysis,friedman2001symmetry} and references therein. These models, which consider the tumor tissue as a density of proliferating cells, are based on reaction-diffusion process and conservation of mass. Within this framework, the effects of various factors such as inhibitors \cite{wang2014bifurcation,wu2012bifurcation}, angiogenesis \cite{huang2017bifurcation,huang2021asymptotic,zhou2022effect,friedman2015analysis}, time delays \cite{zhao2020impact,zhao2020symmetry,he2022linear}, and necrosis \cite{lu2023bifurcation,lu2022nonlinear,hao2012continuation,cui2001analysis} on tumor growth have also been extensively studied.

However, optimizing treatment strategies within these models remains a challenging and relatively unexplored area. While the theory of optimal control is well-established for ordinary differential equations (ODEs) \cite{kirschner1997optimal,lenhart2007optimal,jung2002optimal,miller2010modeling,kim2017mathematical,ledzewicz2019optimal,schattler2015optimal} and partial differential equations (PDEs) in fixed domains \cite{bintz2020optimal,yousefnezhad2021optimal,casas2017optimal,valega2023resource,li2012optimal}, %offering a wide range of applications, 
the introduction of an unknown boundary in free boundary problems adds significant complexity. There have been some notable contributions to the optimal control of free boundary problems, such as the obstacle problem \cite{friedman1986optimal,friedman1987optimal,barbu1984optimal} and the Stefan problem \cite{abdulla2019optimal,abdulla2020optimal,abdulla2021optimal,hinze2007optimal}. In 2013, the authors in \cite{calzada2013optimal} conducted numerical experiments and used real data to find the optimal cancer therapy in a free boundary PDE model of tumor growth. However, to the best of the authors' knowledge, no previous work has theoretically addressed the optimal control of tumor growth models with free boundaries.

This study seeks to fill this gap by proposing an optimal control framework for a free boundary problem modeling the growth of a multilayer tumor. A multilayer tumor is a cluster of tumor cells cultivated in laboratory settings using advanced tissue culture techniques \cite{kyle1999characterization,kim2004three,mueller1997three}. It consists of many layers of tumor cells so that it has an observable thickness. The geometric configuration of the problem is 
$$
\Omega(t) = \{(\bm{x},y)\in \mathbb{R}^2 \times \mathbb{R};\;  0<y<\rho(t,\bm{x})\},
$$
where $\rho(t,\bm{x})$ is an unknown positive function. The tumor receives a constant supply of nutrients (e.g., oxygen or glucose) through the upper boundary 
$$
\Gamma(t)=\{(\bm x, y);\;  \bm x\in \mathbb{R}^2;\ y=\rho(t,\bm{x})\},
$$
which depends on the time variable $t$. The nutrients then diffuse into all parts of the tumor, enabling tumor cells to live and proliferate. The lower boundary $\Gamma_0: y=0$ is fixed and is assumed to be an impermeable support membrane, so that neither nutrient nor tumor cells can pass through it. In the model, there are three unknown functions:
\begin{itemize}
    \item the free boundary $\rho = \rho(t,\bm{x})$; %as introduced above;
    \item the concentration of nutrients $\sigma = \sigma(\bm{x},y,t)$;
    \item the pressure within the tumor $p=p(\bm{x},y,t)$, which is related to the velocity $\vec{V}$ of the cells.
\end{itemize}
We assume that the density of tumor cells depends linearly on the concentration of nutrients; for simplicity, we identify the density of tumor cells directly with $\sigma$. As in \cite{cui2009well,zhou2008bifurcation}, $\sigma$ and $p$ satisfy the following equations:
\begin{eqnarray}
   &\label{e1}\sigma_t = \Delta \sigma - \sigma  &\hspace{2em} (\bm{x},y)\in \Omega(t), \;0<t<T,\\
    &\label{e2}\sigma = 1 &\hspace{2em} (\bm{x},y)\in \Gamma(t), \;0<t<T,\\
    &\label{e3}\frac{\partial \sigma}{\partial y} = 0 &\hspace{2em} (\bm{x},y)\in \Gamma_0, \;0<t<T,\\
    &\label{e4}-\Delta p = \mu(\sigma - \widetilde{\sigma}) &\hspace{2em} (\bm{x},y)\in \Omega(t), \;0<t<T,\\
    &\label{e5}p = \gamma \kappa &\hspace{2em} (\bm{x},y)\in \Gamma(t), \;0<t<T,\\
    &\label{e6}\frac{\p p}{\p y} = 0 &\hspace{2em} (\bm{x},y)\in \Gamma_0, \;0<t<T,
\end{eqnarray}
where $0<\widetilde{\sigma}<1$ is a threshold concentration for the tumor to sustain itself, and $\mu$ is a parameter expressing the ``intensity" of tumor expansion due to mitosis (if $\sigma > \widetilde{\sigma})$ or tumor shrinkage by apoptosis (if $\sigma < \widetilde{\sigma})$. In the model, it is assumed that the tumor region is a porous medium, so that Darcy's law $\vec{V}=-\nabla p$ holds. Combined with the law of conservation of mass $\text{div} \vec{V} = \mu(\sigma-\widetilde{\sigma})$, this leads to the equation \re{e4}. The boundary condition \re{e5} is due to cell-to-cell adhesiveness, where $\gamma$ represents the surface tension coefficient and 
{\color{black}$$\kappa = \frac12 \text{div}\Big(\frac{\nabla(\rho-y)}{|\nabla(\rho-y)|^2}\Big)$$
denotes the mean curvature of the free boundary $y=\rho(t,\bm{x})$}. Furthermore, the continuity of the velocity field $\vec{V}$ on the free boundary gives
\begin{equation}
    \label{e7}V_n = \vec{V}\cdot \vec{n} =  -\nabla p \cdot \vec{n} = -\frac{\p p}{\p n} \hspace{2em} (\bm{x},y)\in \Gamma(t), \;0<t<T,
\end{equation}
where $\frac{\p}{\p n}$ is the derivative along the outward normal $\vec{n}$ and $V_n$ is the velocity of the free boundary $\p \Omega(t)$ in the outward normal direction $\vec{n}$, so that the velocity of the free-boundary is the velocity of cells at the free boundary. The following initial conditions are also prescribed:
\begin{eqnarray}
    &\label{e8} \sigma(\bm{x},y,0) = \sigma_0(\bm{x},y) \hspace{2em} (\bm{x},y)\in \Omega(0),\\
    &\label{e9} \Omega(0) = \{(\bm{x},y)\in \mathbb{R}^2 \times \mathbb{R}; \; 0<y<\rho_0(\bm{x})\}.
\end{eqnarray}

The local well-posedness, asymptotic stability, and bifurcation solutions of the model \re{e1} -- \re{e9} were investigated in \cite{cui2009well,zhou2008bifurcation}. In \cite{cui2009well}, the authors also considered the flat solutions, where all the variables are independent of $\bm{x}$, such that $\rho = \rho(t)$, $\sigma=\sigma(y,t)$, $p=p(y,t)$. In this special case, $V_n = \rho'(t)$, $\frac{\p p}{\p n} = \frac{\p p}{\p y}$, and \re{e7} becomes
\begin{equation}\label{e10}
    \rho'(t) = -\frac{\p p}{\p y}\bigg|_{y=\rho(t)}.
\end{equation}
Integrating \re{e4} and utilizing \re{e6} and \re{e10} leads to
\begin{equation}\label{e11}
    \int_0^{\rho(t)} \mu(\sigma-\widetilde{\sigma}) \dif y = \int_0^{\rho(t)} -p_{yy} \dif y = -p_y(\rho(t),t) + p_y(0,t) = \rho'(t).
\end{equation}
Thus, we derive the equation for $\rho$. Note that \re{e11} does not include $p$, allowing us to eliminate the equations for $p$ and reduce the problem to finding $(\sigma,\rho)$ such that:
\begin{eqnarray}
    \label{fp1}&\displaystyle\sigma_t = \sigma_{yy} - \sigma &\hspace{2em} 0<y<\rho(t), \;0<t<T,\\
    \label{fp2}&\displaystyle\sigma =1 &\hspace{2em}y=\rho(t),\;0<t<T,\\
    \label{fp3}&\displaystyle\sigma_y = 0 &\hspace{2em} y = 0, \;0<t<T,\\
    \label{fp4}&\displaystyle \rho'(t) = \int_0^{\rho(t)} \mu (\sigma - \widetilde{\sigma})\dif y & \hspace{2em} 0<t<T,
\end{eqnarray}
with initial conditions
\begin{eqnarray}
\label{fp5}& \displaystyle \sigma(y,0) = \sigma_0(y)  \hspace{2em}0<y<\rho_0.
\end{eqnarray}
It was proved in \cite{cui2009well} that under the condition $0<\widetilde{\sigma}<1$, the system \re{fp1} -- \re{fp5} admits a unique stationary solution, and this solution is an asymptotically stable equilibrium of the model \re{e1} -- \re{e9} if the surface tension coefficient is large enough. 

In this paper, we introduce a control function in the system \re{fp1} -- \re{fp5}, where the control $m(t)$ represents a tumor growth inhibitor. This inhibitor acts by reducing nutrient levels inside the tumor \cite{byrne1995growth}. The state system with control is defined as:
\begin{eqnarray}
    &\label{ce1}\displaystyle\sigma_t = \sigma_{yy} - \sigma - m\sigma &\hspace{2em} 0<y<\rho(t), \;0<t<T,\\
    &\label{ce2}\displaystyle\sigma =1 &\hspace{2em}y=\rho(t),\;0<t<T,\\
    &\label{ce3}\displaystyle\sigma_y=0 &\hspace{2em} y = 0, \;0<t<T,\\
    &\label{ce4}\displaystyle\rho'(t) = \int_0^{\rho(t)} \mu (\sigma - \widetilde{\sigma})\dif y & \hspace{2em} 0<t<T,\\
    &\label{ce5}\displaystyle\sigma(y,0)=\sigma_0(y)& \hspace{2em} 0<y<\rho_0.
\end{eqnarray}
Our goal is to identify the optimal control within the admissible control set
\begin{equation}
    \label{set}
    U_M = \{ m\in L^\infty (0,T);\; 0\le m(t)\le M, t\in [0, T]\}
\end{equation}
which minimizes the following objective functional:
\begin{equation}
    J(m) = \int_0^T\big( \rho(t)+B m^2(t) \big)\dif t,\label{obj}
\end{equation}
where $(\sigma, \rho)$ is the solution of 
\eqref{ce1}-\eqref{ce5}.
Here, we focus on a spatially independent control function. {\color{black}This simplification assumes that the inhibitor concentration is uniform throughout the tumor tissue.}
The objective functional \re{obj} is designed to balance the combined goals of controlling tumor size and minimizing side effects. We assume that the side effects ``cost'' is a quadratic function of $m$. The coefficient $B\ge 0$ represents a balancing parameter between the two goals.

The structure of this paper is as follows: Section 2 explores the steady-state solution of the control problem \re{ce1} -- \re{obj} as a special case, which is solved using calculus techniques. Section 3 is dedicated to establishing the existence and uniqueness of a positive solution for the state system. Section 4 demonstrates the existence of an optimal control. In Section 5, we derive the sensitivity system, the adjoint system, and the optimality system. We present the numerical algorithm and simulation results in Section 6, and provide a discussion in Section 7.

\section{A special case: steady-state solution}
In the consideration of the steady-state case, the control $m(t)=m$ is a constant and the solution to the state system $(\sigma,\rho)$ is independent of the time variable $t$. Consequently, the state system \re{ce1} -- \re{ce3} is reduced to 
\begin{eqnarray}
    &\label{sce1}\displaystyle\sigma_{yy} = (1+m)\sigma &\hspace{2em}0<y<\rho,\\
    &\label{sce2}\displaystyle\sigma=1 &\hspace{2em} y=\rho,\\
    &\label{sce3}\displaystyle\sigma_y=0&\hspace{2em} y=0,\\
    &\label{sce4}\displaystyle \int_0^{\rho} \mu (\sigma-\widetilde{\sigma})\dif y = 0.
\end{eqnarray}
The  first three equations of the system can be explicitly solved to obtain
\begin{equation}
    \label{ss-sol}
    \sigma(y)=\frac{\cosh(\sqrt{1+m}y)}{\cosh(\sqrt{1+m}\rho)},
\end{equation}
where, by \re{sce4}, the thickness of the tumor tissue $\rho$ is uniquely determined by
\begin{equation}
\label{ss-solrho}
    \frac{1}{\sqrt{1+m}\rho}\tanh(\sqrt{1+m}\rho) = \widetilde{\sigma}.
\end{equation}
Correspondingly, the objective functional also simplifies, eliminating the dependency on the time variable,
\begin{equation}
    \label{ss-obj}
    J(m)=\rho + B m^2.
\end{equation}
The steady-state optimal control problem thus becomes:
\begin{eqnarray*}
    &\displaystyle\min\limits_{m\in[0,M]}\; J(m) = \rho + B m^2\\
    &\displaystyle\text{subject to } \frac{\tanh(\sqrt{1+m}\rho)}{\sqrt{1+m}\rho} = \widetilde{\sigma}.
\end{eqnarray*}

To analyze this optimization problem, we introduce $g(x)=\frac{\tanh(x)}{x}$. Note that $g(x)$ is monotonically decreasing for $x\ge0$. It follows from \re{ss-solrho} that, given $\widetilde{\sigma}>0$,
$$\sqrt{1+m}\rho = g^{-1}(\widetilde{\sigma}).$$
Thus,
\begin{equation}
\label{ss-deri}
\frac{\p \rho}{\p m} = -\frac12 \frac{g^{-1}(\widetilde{\sigma})}{(1+m)^{\frac32}}.
\end{equation}

To find the critical point of the objective functional $J(m)$, we set $J'(m)=\frac{\p \rho}{\p m} + 2B m = 0$. In conjunction with \re{ss-deri}, this analysis yields the equation of the optimal control:
\begin{equation}
    \label{ss-oc}
    m_* (1+m_*)^{\frac32} = \frac{g^{-1}(\widetilde{\sigma})}{4B}.
\end{equation}
{\color{black}Note that the left-hand side function $h(x)=x(1+x)^{\frac32}$ is monotonically increasing for $x\ge 0$, thus the equation \re{ss-oc} admits a unique solution $m_*$.}
Recalling the admissible set $U_M$, the optimal control is defined as
\begin{equation}
    \label{ss-ocs}
    m^*=\min\{m_*,M\}.
\end{equation}
Examination of \re{ss-oc} reveals that an increase in $B$ corresponds to a decrease in  $m_*$, which aligns intuitively with the premise that increasing the severity of side effects may justify the reduction or absence of medication. Moreover, a higher value of $\widetilde{\sigma}$ leads to a lower optimal control $m_*$. This relationship is explained by the role of $\widetilde{\sigma}$ as a critical concentration threshold of nutrients necessary for tumor sustainability and growth. When $\widetilde{\sigma}$ is larger, the tumor requires a higher concentration to sustain itself; consequently, a minimal control effort is sufficient to reduce the nutrient concentration below this critical threshold, thereby inhibiting tumor growth.

\section{Existence and uniqueness of a positive state solution}

In this section, we prove the existence and uniqueness of a solution to the state system \re{ce1} -- \re{ce5}. To analyze the system, we shall make a change of variables to flatten the free boundary. Let
$$\xi= \frac{y}{\rho(t)},\hspace{2em} \sigma(y,t) = u\Big(\frac{y}{\rho(t)},t\Big)=u(\xi,t),\hspace{2em} \sigma_0(y)=u_0\Big(\frac{y}{\rho(t)}\Big)=u_0(\xi).$$
With the change of variables, the problem \re{ce1} -- \re{ce5} becomes
\begin{eqnarray}
    &\label{eqn1} \displaystyle u_t - \frac{\rho'(t)}{\rho(t)}\xi u_{\xi} - \frac{1}{\rho^2(t)} u_{\xi\xi} = -u - mu &\hspace{2em} 0<\xi<1,\;0<t<T,\\
    &\label{eqn2}\displaystyle u = 1 &\hspace{2em} \xi=1, \; 0<t<T,\\
    &\label{eqn3}\displaystyle u_\xi = 0 &\hspace{2em} \xi=0,\; 0<t<T,\\
    &\label{eqn4}\displaystyle\rho'(t) = \rho(t) \int_0^1 \mu(u-\widetilde{\sigma})\dif \xi &\hspace{2em} 0<t<T,
\end{eqnarray}
supplemented with the initial conditions
\begin{eqnarray}
    &\label{eqn5}\displaystyle  \rho(0) = \rho_0 > 0,\\
    &\label{eqn6}\displaystyle u(\xi,0) = u_0(\xi)  \;\text{ for }\;0<\xi < 1,
\end{eqnarray}
which satisfy the compatibility conditions
\begin{equation}\label{compatible}
    u_0(1) = 1,\hspace{2em} (u_0)_\xi(0)=0.
\end{equation}
The existence and uniqueness of a solution are guaranteed by the following theorem. In all the proofs presented in this paper, we denote by $C$ a generic constant, which may vary from line to line. 
%{\color{black} Throughout the paper, define $[0,1]\times[0,T]=(0, 1)\times (0, T)$.}

\begin{thm}\label{exi}
    Suppose that $u_0\in C^2[0,1]$ satisfy $0\le u_0(\xi)\le 1$ for $0<\xi < 1$ and the compatibility condition \re{compatible}.
  %Fix $T>0$.  
  For any $m\in U_M$, system \re{eqn1} -- \re{eqn6} has a unique  solution $(u,\rho) \in W^{2,1,p}((0,1)\times(0,T)) \times C^1[0,T]$ for any $p>1$. In addition,
  \begin{eqnarray}
      \label{est:u}
      &0<u(\xi, t)<1 \quad \text{ for all } (\xi, t)\in (0,1)\times[0,T],\\
      \label{est:rho}&0< \rho_0 e^{-\mu\widetilde{\sigma} T} \le  \rho(t) \le \rho_0 e^{\mu(1-\widetilde{\sigma})T}\quad \text{ for all } t\in[0,T].
  \end{eqnarray}
\end{thm}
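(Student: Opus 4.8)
The plan is to decouple the system by a fixed-point argument. Since the equations for $p$ have already been eliminated, the remaining coupling is through $\rho$, which appears in the coefficients of \re{eqn1}; I would remove it by substituting the ODE \re{eqn4}--\re{eqn5}, which is equivalent to the Volterra-type formula $\rho(t)=\rho_0\exp\!\big(\int_0^t\mu\int_0^1(u(\xi,s)-\widetilde\sigma)\dif\xi\dif s\big)$. A solution of \re{eqn1}--\re{eqn6} is then the same as a function $u$ solving the single non-local parabolic problem
$$
u_t-\Big(\mu\!\int_0^1(u-\widetilde\sigma)\dif\xi\Big)\xi\,u_\xi-\frac{1}{\rho[u]^2(t)}\,u_{\xi\xi}=-(1+m)u,\qquad u_\xi(0,t)=0,\ \ u(1,t)=1,\ \ u(\xi,0)=u_0(\xi),
$$
with $\rho[u](t):=\rho_0\exp(\int_0^t\mu\int_0^1(u-\widetilde\sigma)\dif\xi\dif s)$, and $\rho$ recovered afterwards as $\rho[u]$.

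Next I would solve this by Banach's fixed-point theorem on a short interval. Fix $T^*\in(0,T]$ and let $\mathcal B=\{\,\bar u\in C([0,T^*];L^p(0,1)):0\le\bar u\le1\text{ a.e.}\,\}$, which is closed and convex. For $\bar u\in\mathcal B$ the coefficient $\mu\int_0^1(\bar u-\widetilde\sigma)\dif\xi$ is bounded in $t$ and, since $\int_0^1\bar u\dif\xi\in[0,1]$, one has $\rho_0e^{-\mu\widetilde\sigma T^*}\le\rho[\bar u](t)\le\rho_0e^{\mu(1-\widetilde\sigma)T^*}$, so $1/\rho[\bar u]^2$ is continuous in $t$ and bounded below by a positive constant; freezing these two coefficients gives a uniformly parabolic linear problem, and standard $L^p$ parabolic theory (the compatibility \re{compatible} matching the boundary data at $t=0$) yields a unique $u=\Psi(\bar u)\in W^{2,1,p}((0,1)\times(0,T^*))$, with $\|\Psi(\bar u)\|_{W^{2,1,p}}$ bounded uniformly over $\mathcal B$. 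The maximum principle — valid because the zeroth-order coefficient $1+m(t)\ge1>0$ and $0\le u_0\le1$ — gives $0\le\Psi(\bar u)\le1$, so $\Psi(\mathcal B)\subseteq\mathcal B$, and the strong maximum principle upgrades this to the strict inequalities in \re{est:u} in the parabolic interior. For the contraction, given $\bar u_1,\bar u_2\in\mathcal B$ with $u_i=\Psi(\bar u_i)$, the difference $w=u_1-u_2$ solves a uniformly parabolic problem with zero initial and boundary data and right-hand side $\big(\mu\int_0^1(\bar u_1-\bar u_2)\dif\xi\big)\xi\,\partial_\xi u_2+\big(\rho[\bar u_1]^{-2}-\rho[\bar u_2]^{-2}\big)\partial_\xi^2 u_2$, whose $L^p$-norm is bounded by $\|\bar u_1-\bar u_2\|_{C([0,T^*];L^p)}$ times the uniform bound on $\|u_2\|_{W^{2,1,p}}$; combined with the $L^p$ estimate for $w$ and the fact that $w(\cdot,0)=0$ (which gives an extra factor $(T^*)^{1-1/p}$), this yields $\|w\|_{C([0,T^*];L^p)}\le C\,(T^*)^{1-1/p}\,\|\bar u_1-\bar u_2\|_{C([0,T^*];L^p)}$, a contraction for $T^*$ small. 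Its unique fixed point $u$, together with $\rho=\rho[u]\in C^1$, is the unique local solution; uniqueness in $W^{2,1,p}\times C^1$ follows since any such solution satisfies $0\le u\le1$ and $\rho=\rho[u]$.

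Finally, to reach $[0,T]$ I would use that the a priori bounds are \emph{uniform}: \re{est:rho} is immediate from $\rho=\rho[u]$ and $0\le u\le1$ and holds on any interval of existence, and \re{est:u}--\re{est:rho} bound all coefficients of \re{eqn1} on $[0,T]$ in terms of $T,\rho_0,\mu,\widetilde\sigma,M$ only. Hence the length $T^*$ of the local step can be chosen independent of the starting time, and finitely many steps cover $[0,T]$; at each restart $u(\cdot,t_0)$ lies in the trace space $W^{2-2/p,p}(0,1)$ (and is in fact smooth for $t_0>0$ by interior parabolic regularity), which is the regularity the $L^p$ scheme requires. The estimates \re{est:u}--\re{est:rho} then hold on all of $[0,T]$.

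I expect the main obstacle to be the contraction step: quantifying how the parabolic solution depends on the domain-stretching coefficients $\rho'/\rho=\mu\int_0^1(u-\widetilde\sigma)\dif\xi$ and $1/\rho^2$, and in particular securing the uniform-over-$\mathcal B$ bound on $\|\Psi(\bar u)\|_{W^{2,1,p}}$ that lets the $\partial_\xi^2 u_2$ term in the difference equation be absorbed together with the short-time gain. A subsidiary technical nuisance is the corner compatibility at $\{t=0\}\times\{\xi\in\{0,1\}\}$ underlying the claim "$W^{2,1,p}$ for any $p>1$," and the regularity bookkeeping at each continuation step.
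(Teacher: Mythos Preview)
Your proposal is correct and complete in outline, but it is organized differently from the paper's proof. The paper also uses a contraction argument plus maximum principle plus global continuation via the a~priori bounds \re{est:u}--\re{est:rho}, so the overall architecture is the same; the difference is in \emph{what} is iterated on. The paper runs the fixed point on the free-boundary function: it fixes $s\in C^1[0,T]$ with $s(0)=\rho_0$, solves the linear parabolic problem \re{thm1-1} for $u$ with $s$ in the coefficients, then feeds $u$ into the ODE to produce $\rho=\mathcal M_1 s$, and shows $\mathcal M_1$ contracts in $C^1[0,T]$ by combining the parabolic $L^p$ estimate (with the embedding into $C^{1+\alpha,(1+\alpha)/2}$) and a Gronwall inequality for $\rho_1-\rho_2$. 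You instead eliminate $\rho$ via the explicit formula $\rho[u]$, reduce to a single nonlocal parabolic equation, and iterate on $u$ in $C([0,T^*];L^p)$. Your route makes the drift coefficient $\rho'/\rho=\mu\int_0^1(u-\widetilde\sigma)\dif\xi$ explicit and avoids the two-step $s\mapsto u\mapsto\rho$ bookkeeping; the paper's route has the advantage that the iteration variable is a function of $t$ alone, which makes the Gronwall step clean and separates the PDE estimate from the contraction estimate. Both lead to the same short-time gain and the same global continuation.

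The one technical point you correctly flag---that the parabolic $L^p$ constant must not blow up as $T^*\to0$---is handled in the paper by extending the coefficient $s$ (constantly) from $[0,T]$ to the fixed interval $[0,1]$, applying the $W^{2,1,p}$ estimate and embedding there, and then restricting back; the same device works verbatim in your scheme for $\rho[\bar u]$ and the drift coefficient.
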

\begin{proof}
We shall use the contraction mapping principle to prove the existence and uniqueness of the solution. Take
\begin{equation}
    \label{K1bound}
    \mathcal{K}_1=\{s \in C^1[0,T];\;  s(0) = \rho_0, \|s\|_{C^1[0,T]}\le K_1\},
\end{equation}
where $K_1$ is a constant satisfying
\begin{equation}
    \label{K}
    K_1\ge \rho_0 e^{\mu(1-\widetilde{\sigma})}\big(1+\max\{\mu\widetilde{\sigma},\mu(1-\widetilde{\sigma})\}\big).
\end{equation}
By the definition of $\mathcal{K}_1$, we can choose $T<1$ sufficiently small such that $s(t)\ge \rho_0/2$ for all {\color{black} $s\in \mathcal{K}_1$ and} $t\in [0,T]$.

For each $s\in \mathcal{K}_1$, we first solve $u(\xi,t)$ from the following system:
\begin{equation}\label{thm1-1}
    \left\{
    \begin{aligned}
        &u_t - \frac{s'(t)}{s(t)}\xi u_{\xi} - \frac{1}{s^2(t)} u_{\xi\xi} + (1+m)u =0\hspace{2em} &0<\xi<1,\;0<t<T,\\
        & u = 1 \hspace{2em} &\xi = 1, \; 0<t<T,\\
        & u_\xi = 0 \hspace{2em} &\xi = 0, \; 0 <t<T,\\
        & u = u_0 \hspace{2em} &0<\xi<1,\; t=0.
    \end{aligned}
    \right.
\end{equation}
By the maximum principle,
\begin{equation}
    \label{est-u}
    0<u(\xi,t)<1\hspace{1em}\text{for }0<\xi<1,\;t>0.
\end{equation}
Since $m\in L^\infty$, applying $L^p$ estimates for parabolic equations 
and the embedding $W^{2,1,p}((0,1)\times(0,T))\hookrightarrow C^{1+\alpha,(1+\alpha)/2}([0,1]\times[0,T])$ ($p>n+2=3$, $\alpha=1-\frac{n+2}{p}=1-\frac{3}{p}$, Theorem 3.14 in \cite{hu2011blow}) yields
\begin{equation}\label{thm1-2}
    \|u\|_{C^{1+\alpha,(1+\alpha)/2}([0,1]\times[0,T])} \le C(T)\|u\|_{W^{2,1,p}((0,1)\times(0,T))}\le C(T),
\end{equation}
so, $u\in C^{1+\alpha,(1+\alpha)/2}( [0,1]\times[0,T])$. 
Note that the constant $C(T)$ in the embedding $W^{2,1,p} ((0,1)\times(0,T))\hookrightarrow C^{1+\alpha,(1+\alpha)/2}( [0,1]\times[0,T])$ depends on the lower bound of $T$. {\color{black}To avoid this dependency, we extend the function $s(t)$ so that it is defined in a fixed interval $[0,1]$. We let
\begin{equation*}
    \widetilde{s}(t) = \begin{cases}
        s(t), \quad \text{for } t\in [0,T],\\
        s(T), \quad \text{for } t\in (T,1].
    \end{cases}
\end{equation*}
It is clear that $\tilde{s}'(t)\equiv 0$ for $t\in(T,1)$, so,
\begin{equation*}
    \|\tilde{s}'\|_{L^\infty(0,1)} \le \|s'\|_{L^\infty(0,T)} \le \|s\|_{C^1[0,T]}.
\end{equation*}
In addition, 
$$\tilde{s}(t)\ge \rho_0/2 \quad \text{for } t\in[0,1]$$ 
and 
$$\|\tilde{s}\|_{L^\infty(0,1)} \le \|s\|_{L^\infty(0,T)} \le \|s\|_{C^1[0,T]}.$$ 
With $s$ replaced by $\tilde{s}$, we solve $u$ from the system \re{thm1-1} within the domain $[0,1]\times[0,1]$. Utilizing $L^\infty$ estimates and the embedding theorem, we have
\begin{equation}
    \|u\|_{C^{1+\alpha,(1+\alpha)/2}([0,1]\times[0,1])}\le C\|u\|_{W^{2,1,p}((0,1)\times(0,1))}\le C,
\end{equation}
thereby 
\begin{equation}
    \|u\|_{C^{1+\alpha,(1+\alpha)/2}([0,1]\times[0,
T])} \le C\|u\|_{C^{1+\alpha,(1+\alpha)/2}([0,1]\times[0,
1])}\le C,
\end{equation}
where the constants are now independent of $T$.}

Then, we define $\rho(t)$ as the solution of the following system,
\begin{equation}
    \label{thm1-3}
    \left\{
    \begin{aligned}
        & \rho'(t) = \rho(t) \int_0^1 \mu(u-\widetilde{\sigma})\dif \xi,\\
        &\rho(0) = \rho_0,
    \end{aligned}\right.
\end{equation}
which can be solved explicitly as
\begin{equation}
    \rho(t) = \rho_0 \exp\Big\{\int_0^t\int_0^1 \mu (u-\widetilde{\sigma})\dif \xi \dif s\Big\}.
\end{equation}
Obviously, $\rho(t)\in C^1[0,T]$. Utilizing the estimate of $u$ given in \re{est-u}, we derive the following estimates for $\rho(t)$:
\begin{equation}\label{rhot}
   \rho_0 e^{-\mu\widetilde{\sigma} t} \le  \rho(t) \le \rho_0 e^{\mu(1-\widetilde{\sigma})t}\hspace{2em}\text{for $0\le t\le T$.}
\end{equation}
Subsequently, by substituting \re{rhot} into \re{thm1-3}, we obtain bounds for $\rho'(t)$,
\begin{equation}\label{rhopt}
    -\mu\widetilde{\sigma}\rho(t) \le \rho'(t) \le \mu(1-\widetilde{\sigma})\rho(t)\hspace{2em}\text{for $0\le t\le T$.}
\end{equation}
For any $T<1$, we have 
$$\|\rho\|_{C^1[0,T]} = \|\rho\|_{L^\infty(0,T)} + \|\rho'\|_{L^\infty(0,T)} \le \rho_0 e^{\mu(1-\widetilde{\sigma})} + \rho_0 e^{\mu(1-\widetilde{\sigma})}\max\{\mu\widetilde{\sigma},\mu(1-\widetilde{\sigma})\}\le K_1$$
by the choice of $K_1$ in \re{K}. Thus, $\rho\in \mathcal{K}_1$. So, we can define a mapping $\mathcal{M}_1: \mathcal{K}_1\to \mathcal{K}_1$ by 
$$
\rho:=\mathcal{M}_1s.
$$

%By the $W^{2,1,p}$ estimate and the embedding ($p>n+2=3$, $\alpha=1-\frac{n+2}{p}=1-\frac{3}{p})$, we have
%\begin{equation}\label{thm1-2}
%    \|u\|_{C^{1+\alpha,(1+\alpha)/2}([0,1]\times[0,T])} \le C\|u\|_{W^{2,1,p}((0,1)\times(0,T))}\le C.
%\end{equation}

Next, we prove that $\mathcal{M}_1$ is a contraction mapping on $\mathcal{K}_1$. We take $s_1,s_2\in \mathcal{K}_1$ and denote by $u_1,u_2$ the corresponding solutions of system \re{thm1-1}. Denote the difference by $\phi = u_1-u_2$. The variable $\phi$ satisfies
\begin{equation}\label{thm1-ex}
    \left\{
    \begin{aligned}
        &\begin{aligned}\phi_t - \frac{s'_1(t)}{s_1(t)}\xi \phi_{\xi} - \frac{1}{s_1^2(t)} \phi_{\xi\xi} +(1+m)\phi = \Big(\frac{s_1'(t)}{s_1(t)}-\frac{s_2'(t)}{s_2(t)}\Big)\xi (u_2)_{\xi} \\
        \hspace{8em}+\Big(\frac{1}{s_1^2(t)}-\frac{1}{s_2^2(t)}\Big)(u_2)_{\xi\xi}\end{aligned}\hspace{2em} &0<\xi<1,\;0<t<T,\\
        & \phi= 0 \hspace{2em} &\xi = 1, \; 0<t<T,\\
        & \phi_\xi = 0 \hspace{2em} &\xi = 0, \; 0 <t<T,\\
        & \phi= 0 \hspace{2em} &0<\xi<1,\; t=0.
    \end{aligned}
    \right.
\end{equation}
Similar to the first part of this proof, we have
\begin{equation}
    \label{u1u2}
    0<u_1, u_2<1\hspace{1em}\text{for }0<\xi<1,\;t>0.
\end{equation}
Applying $L^p$ estimates on the system \re{thm1-ex} and the embedding theorem (also after artificially extending the time interval to $[0,1]$ as mentioned above), we obtain, for $0<T<1$,
\begin{equation}
    \label{thm1-6}
    \begin{split}
        \|u_1-u_2\|_{C^{1+\alpha,(1+\alpha)/2}([0,1]\times[0,T])} = 
    \|\phi\|_{C^{1+\alpha,(1+\alpha)/2}([0,1]\times[0,T])} &\le C\|\phi\|_{W^{2,1,p}((0,1)\times(0,1))}\\
    &\le C\|s_1-s_2\|_{W^{1,p}(0,1)} \\
    &\le C\|s_1-s_2\|_{C^1[0,T]},
    \end{split}
\end{equation}
where the constant $C$ is independent of $T$.
Then, we use system \re{thm1-3} to define $\rho_1$ and $\rho_2$, i.e.,
$$\rho_i =\mathcal{M}_1s_i \quad\text{ for } i=1,2.$$ 
Subtracting the equations for $\rho_1$ and $\rho_2$, we have, for $t>0$,
\begin{equation}\label{thm1-4}
    \begin{split}
        \big(\rho_1(t)-\rho_2(t)\big)'  &= \rho_1(t)\int_0^1 \mu(u_1-\widetilde{\sigma})\dif \xi - \rho_2(t)\int_0^1 \mu(u_2-\widetilde{\sigma})\dif \xi \\
        &= \big(\rho_1(t)-\rho_2(t)\big) \int_0^1 \mu(u_1-\widetilde{\sigma})\dif \xi + \rho_2(t)\int_0^1 \mu(u_1-u_2)\dif \xi.
    \end{split}
\end{equation}
Multiplying \re{thm1-4} with $\rho_1-\rho_2$ leads to
\begin{equation*}
\begin{split}
    \frac12 \frac{\dif }{\dif t}\big(\rho_1-\rho_2\big)^2 &= \big(\rho_1-\rho_2\big)^2 \int_0^1 \mu (u_1-\widetilde{\sigma})\dif \xi + \rho_2\big(\rho_1-\rho_2\big)\int_0^1\mu (u_1-u_2)\dif \xi\\
    &\le \mu(1-\widetilde{\sigma})\big(\rho_1-\rho_2\big)^2 + \frac12\bigg(\big(\rho_1-\rho_2\big)^2 + \rho_2^2\Big(\int_0^1 \mu(u_1-u_2)\dif \xi\Big)^2\bigg)\\
    &\le \frac{C_1}2|\rho_1-\rho_2|^2 + \frac{C_2}2\|u_1-u_2\|^2_{L^\infty((0,1)\times (0,T))},
\end{split}
\end{equation*}
for some constants $C_1,C_2>0$ that are independent of $T$. 
Utilizing the Gronwall inequality and noting that $(\rho_1-\rho_2)(0)=0$, we obtain
\begin{equation*}
    \big|(\rho_1-\rho_2)(t)\big|\le \sqrt{C_2 e^{C_1}}T^{1/2}\|u_1-u_2\|_{L^\infty((0,1)\times (0,T))}\hspace{2em}\text{for $0\le t\le T<1$}.
\end{equation*}
Combined with \re{thm1-6} and the initial condition $u_1(\xi,0)-u_2(\xi,0)=0$, it implies
\begin{equation*}
\begin{split}
    \big|(\rho_1-\rho_2)(t)\big| &\le \sqrt{C_2 e^{C_1}}T^{1/2} T^{(1+\alpha)/2} \|u_1 - u_2\|_{C^{1+\alpha,(1+\alpha)/2}([0,1]\times[0,T])}\\
    &\le C T \|s_1-s_2\|_{C^1[0,T]}\hspace{2em}\text{for $0\le t\le T<1$}.
    \end{split}
\end{equation*}
Similarly, we substitute this estimate back into \re{thm1-4} to obtain
\begin{equation*}
    \begin{split}
        \big|(\rho_1-\rho_2)'(t)\big| &\le C\|\rho_1-\rho_2\|_{L^\infty(0,T)} + C\|u_1-u_2\|_{L^\infty((0,1)\times(0,T))}\\
        &\le C T \|s_1-s_2\|_{C^1[0,T]} + C T^{(1+\alpha)/2}\|u_1 - u_2\|_{C^{1+\alpha,(1+\alpha)/2}([0,1]\times[0,T])}\\
        &\le C T^{1/2}  \|s_1-s_2\|_{C^1[0,T]}\hspace{2em}\text{for $0\le t\le T<1$}.
    \end{split}
\end{equation*}
When $T$ is sufficiently small, we have 
$$\|\mathcal{M}_1s_1-\mathcal{M}_1s_2\|_{C^1[0,T]} = \|\rho_1-\rho_2\|_{C^1[0,T]} \le \frac{1}{2} \|s_1-s_2\|_{C^1[0,T]},$$ 
which indicates that $\mathcal{M}_1$ is a contraction. By contraction mapping principle, the system admits a unique solution for small $T$.

{\color{black}Finally, we prove the solution can be extended to all $T>0$. Suppose, for the sake of contradiction, that there exists $0<\widetilde{T}<\infty$ such that $[0,\widetilde{T})$ is the maximum time interval for the existence of the solution. Notice that \re{est-u} does not depend on $\widetilde{T}$. Using \re{rhot} and \re{rhopt}, we obtain, for all $t\in[0,\widetilde{T})$,
\begin{equation}
    \label{wT1}
    \rho_0 e^{-\mu \widetilde{\sigma} \widetilde{T}} \le \rho(t) \le \rho_0 e^{\mu(1- \widetilde{\sigma}) \widetilde{T}},
\end{equation}
\begin{equation}
    \label{wT2}
    -\mu \widetilde{\sigma} \rho_0 e^{\mu(1- \widetilde{\sigma}) \widetilde{T}} \le \rho'(t) \le \mu(1-\widetilde{\sigma})\rho_0 e^{\mu(1- \widetilde{\sigma}) \widetilde{T}}.
\end{equation}
Taking $\widetilde{T}-\varepsilon$ (where $0<\varepsilon<\widetilde{T}$ is arbitrary) as a new initial time, we can extend the solution to $0\le t\le \widetilde{T}-\varepsilon +\delta(\widetilde{T})$ for some small $\delta(\widetilde{T})>0$ as in the previous proof. 
Using \re{wT1} and \re{wT2}, we see that $\delta(\widetilde{T})$ is independent of $\varepsilon$. If we take $\varepsilon<\delta(\widetilde{T})$, then we have
$$\widetilde{T}-\varepsilon + \delta(\widetilde{T}) > \widetilde{T},$$
which contradicts the assumption that $[0,\widetilde{T})$ is the maximum interval. Therefore, the solution exists for all $T>0$.}

\end{proof}

\section{Existence of an optimal control}

This section is devoted to the existence of a solution for the minimization problem \re{obj}.

\begin{thm}\label{exi-oc}
    Suppose that the assumptions in Theorem \ref{exi} are satisfied. Then, for any $T>0$, there exists an optimal control in $U_M$ that minimizes the objective functional $J(m)$ subject to \re{eqn1} -- \re{eqn6}, i.e., there is $m^*\in U_M$ such that $J(m^*)=\inf \{J(m)| m\in U_M\}$.
\end{thm}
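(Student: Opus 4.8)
The plan is to use the direct method of the calculus of variations. Since $\rho(t)>0$ by Theorem \ref{exi} and $Bm^2\ge 0$, the functional $J$ is bounded below by $0$ on $U_M$, so $J^*:=\inf\{J(m):m\in U_M\}$ is finite and nonnegative. Pick a minimizing sequence $\{m_k\}\subset U_M$ with $J(m_k)\to J^*$, and let $(u_k,\rho_k)$ be the corresponding states provided by Theorem \ref{exi}. Because $0\le m_k\le M$, the sequence is bounded in $L^\infty(0,T)$, so after passing to a subsequence (not relabeled) we have $m_k\to m^*$ weakly-$*$ in $L^\infty(0,T)$. The admissible set $U_M$ is convex and strongly closed in $L^2(0,T)$, hence weakly closed, and on a bounded interval $L^2(0,T)\hookrightarrow L^1(0,T)$; therefore the limit satisfies $m^*\in U_M$, and $m_k\rightharpoonup m^*$ weakly in $L^2(0,T)$ as well.

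Next I would collect uniform a priori bounds on $(u_k,\rho_k)$. The estimates \re{est:u} and \re{est:rho} give $0<u_k<1$ and $\rho_0 e^{-\mu\widetilde{\sigma}T}\le\rho_k\le\rho_0 e^{\mu(1-\widetilde{\sigma})T}$, all independent of $k$; inserting these into \re{eqn4} bounds $\|\rho_k'\|_{L^\infty(0,T)}$ uniformly, so $\{\rho_k\}$ is bounded in $C^1[0,T]$. The $L^p$ parabolic estimates used in the proof of Theorem \ref{exi} — whose right-hand side $-(1+m_k)u_k$ is bounded in $L^\infty$ uniformly in $k$ — give a uniform bound for $\|u_k\|_{W^{2,1,p}((0,1)\times(0,T))}$, and hence, via the embedding $W^{2,1,p}\hookrightarrow C^{1+\alpha,(1+\alpha)/2}$, a uniform bound in $C^{1+\alpha,(1+\alpha)/2}([0,1]\times[0,T])$. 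By Arzel\`a--Ascoli I extract a further subsequence with $\rho_k\to\rho^*$ in $C^1[0,T]$ and $u_k\to u^*$ in $C^{1,1/2}([0,1]\times[0,T])$, and from the $W^{2,1,p}$ bound, $u_k\rightharpoonup u^*$ weakly in $W^{2,1,p}$, so $(u_k)_t\rightharpoonup u^*_t$ and $(u_k)_{\xi\xi}\rightharpoonup u^*_{\xi\xi}$ weakly in $L^p$. Note $\rho^*\ge\rho_0 e^{-\mu\widetilde{\sigma}T}>0$.

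The main work — and the main obstacle — is to show $(u^*,\rho^*)$ is the state corresponding to $m^*$ by passing to the limit in \re{eqn1}--\re{eqn6}. In the diffusion and transport terms, $\rho_k\to\rho^*$ in $C^1$ with $\rho^*$ bounded away from zero forces $1/\rho_k^2\to 1/(\rho^*)^2$ and $\rho_k'/\rho_k\to(\rho^*)'/\rho^*$ uniformly; combined with $(u_k)_\xi\to u^*_\xi$ uniformly and the weak $L^p$ convergence of $(u_k)_{\xi\xi}$ and $(u_k)_t$, each term converges (weakly in $L^p$) to its counterpart, since multiplication by a fixed bounded function is weakly continuous on $L^p$ and a uniformly small factor times an $L^p$-bounded factor is strongly small. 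For the zeroth-order term I write $m_k u_k=m_k(u_k-u^*)+(m_k-m^*)u^*$: the first piece tends to $0$ strongly in $L^\infty$ because $\|m_k\|_{L^\infty}\le M$ and $u_k\to u^*$ uniformly, while $(m_k-m^*)u^*\to 0$ weakly-$*$ since $m_k-m^*$ depends only on $t$ and converges weakly-$*$ to $0$ in $L^\infty(0,T)$; hence $m_k u_k\to m^*u^*$ weakly-$*$. The boundary conditions \re{eqn2}, \re{eqn3}, the initial data \re{eqn6} (all $u_k$ share the datum $u_0$), and $\rho_k(0)=\rho_0$ pass to the limit by the uniform $C^1$-convergence of $u_k$ and $\rho_k$; the integral identity \re{eqn4} passes to the limit because $u_k\to u^*$ and $\rho_k\to\rho^*$ uniformly. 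Thus $(u^*,\rho^*)$ solves \re{eqn1}--\re{eqn6} with control $m^*$, and by the uniqueness part of Theorem \ref{exi} it is \emph{the} state associated with $m^*$.

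Finally I would check lower semicontinuity of $J$. Since $\rho_k\to\rho^*$ uniformly on $[0,T]$, we get $\int_0^T\rho_k\,\dif t\to\int_0^T\rho^*\,\dif t$; and since $m\mapsto\int_0^T m^2\,\dif t$ is convex and continuous, hence weakly lower semicontinuous, on $L^2(0,T)$, we have $\int_0^T(m^*)^2\,\dif t\le\liminf_{k\to\infty}\int_0^T m_k^2\,\dif t$ (this step is vacuous when $B=0$). Therefore
$$J(m^*)=\int_0^T\rho^*\,\dif t+B\int_0^T(m^*)^2\,\dif t\le\liminf_{k\to\infty}\Big(\int_0^T\rho_k\,\dif t+B\int_0^T m_k^2\,\dif t\Big)=\liminf_{k\to\infty} J(m_k)=J^*.$$
Since $m^*\in U_M$ forces $J(m^*)\ge J^*$, we conclude $J(m^*)=J^*$, so $m^*$ is an optimal control.
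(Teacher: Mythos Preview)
Your proof is correct and follows the same direct-method strategy as the paper: minimizing sequence, weak compactness of controls in $U_M$, uniform $W^{2,1,p}\times C^1$ bounds on the states from Theorem~\ref{exi}, compactness to pass to the limit in \re{eqn1}--\re{eqn6}, and weak lower semicontinuity of $m\mapsto\int_0^T m^2\,\dif t$. The only cosmetic difference is that you claim $\rho_k\to\rho^*$ in $C^1[0,T]$ (which does hold, since the ODE \re{eqn4} together with the uniform H\"older-in-time bound on $u_k$ makes $\{\rho_k'\}$ equicontinuous --- a point worth spelling out, as a bare $C^1$ bound alone only yields precompactness in $C$), whereas the paper is content with uniform convergence of $\rho^n$ plus weak $L^2$ convergence of $(\rho^n)'$.
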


\begin{proof}
    For a fixed $T>0$, from \re{est:u} and \re{est:rho} and the definition of $U_M$ in \re{set}, we have
    $$
    0\le J(m) \le \big(\rho_0 e^{\mu(1-\widetilde{\sigma})}T + BM^2\big) T. 
    $$
     So, $J(m)$ is uniformly bounded for any $m\in U_M$. Hence, there exists a minimizing sequence $\{m_n\}\subset U_M$ such that
    $$\lim\limits_{n\rightarrow \infty} J(m_n) = \inf\{J(m)\,|\,m\in U_M\}.$$
    Since $|m_n|\le M$ for all $n$, there exists a subsequence
    $$
    m^n\rightharpoonup m^* \quad \text{ weakly in $L^2(0,T)$}.
    $$
    By Theorem \ref{exi}, we define $(u^n,\rho^n)=(u(m^n),\rho(m^n))$ be the unique solution to the system \re{eqn1} -- \re{eqn6} corresponding to $m^n$. As in the proof of Theorem \ref{exi}, we have
    \begin{eqnarray}
\label{bound-u}        &\|u^n\|_{W^{2,1,p}((0,1)\times(0,T))}\le C,\\
\label{bound-rho}        &\|\rho^n\|_{C^1[0,T]}\le C,
    \end{eqnarray}
    where the constants $C$ are independent of $n$. Since the sequence $\{u^n\}$ is bounded in $W^{2,1,p}((0,1)\times(0,T))$, there exists $u^*\in W^{2,1,p}((0,1)\times(0,T))$ and a subsequence of $\{u^n\}$ that converges weakly to $u^*$ in $W^{2,1,p}((0,1)\times(0,T))$. Since the embedding $W^{2,1,p}((0,1)\times(0,T))\hookrightarrow C^{1+\alpha,(1+\alpha)/2}([0,1]\times[0,T])$ is compact, we have the following convergence on subsequences:
    \begin{eqnarray*}
        &u^n \rightarrow u^* \text{ in $C^{1,\frac12}([0,1]\times[0,T])$}, \quad  u^n_{\xi\xi}\rightharpoonup u^*_{\xi\xi}, u^n_t\rightharpoonup u^*_t  \text{ weakly in $L^2((0,1)\times(0,T))$}.
    \end{eqnarray*}
    For the sequence $\{\rho^n\}$, by \re{bound-rho}, and the Arzel\`a-Ascoli Theorem, we obtain on a subsequence
    $$\rho^n \rightarrow \rho^* \text{ uniformly in $C[0,T]$}.$$
    Furthermore, since $\|(\rho^n)'\|_{L^\infty(0,T)}\le \|\rho^n\|_{C^1[0,T]}\le C$ for all $n$, we also have
    $$\quad (\rho^n)' \rightharpoonup (\rho^*)' \text{ weakly in $L^2(0,T)$}.$$
    These convergences and uniform $L^\infty$ bounds imply that
    \begin{eqnarray*}
        \frac{(\rho^n)'}{\rho^n}\xi u^n_\xi \rightharpoonup \frac{(\rho^*)'}{\rho^*}\xi u^*_\xi,\; \frac{1}{(\rho^n)^2}u^n_{\xi\xi}\rightharpoonup \frac{1}{(\rho^*)^2}u^*_{\xi\xi},\; m^n u^n\rightharpoonup m^* u^* \quad \text{ weakly in $L^2((0,1)\times(0,T))$}.
    \end{eqnarray*}
Passing to the limit in the system \re{eqn1} -- \re{eqn6}, we see that $(u^*,\rho^*)$ is the unique solution associated with $m^*$. 

Finally, we need to verify that $m^*$ is an optimal control, i.e., $J(m^*)\le \inf\{J(m)|m\in U_M\}$. By the lower semicontinuity of the $L^2$ norm with respect to weak convergence, we have
$$\lim\limits_{n\rightarrow \infty} \inf \int_0^T \big(m^n(t)\big)^2 \dif t \ge \int_0^T \big(m^*(t)\big)^2 \dif t.$$
Thus,
\begin{equation*}
    \begin{split}
         \inf\{J(m)|m\in U_M\} = \lim\limits_{n\rightarrow \infty} J(m_n) &= \lim\limits_{n\rightarrow \infty} \int_0^T \Big(\rho^n(t) + B \big(m^n(t)\big)^2\Big)\dif t \\
         &\ge \int_0^T \Big(\rho^*(t) + B \big(m^*(t)\big)^2\Big) \dif t = J(m^*),
    \end{split}
\end{equation*}
which verifies that $m^*$ is an optimal control that minimizes the objective functional $J(m)$.
\end{proof}

\section{Derivation of the optimality system}

In this section, we characterize some properties of the optimal control by deriving the optimality system and necessary conditions for a minimizer. The optimality system consists of the state system, the adjoint system, and the characterization of the optimal control. Following the fundamental work of J. L. Lions \cite{lions1971optimal}, we first differentiate the solution $(u,\rho)$ to the state system with respect to the control $m$, which is guaranteed by the following theorem.

\begin{thm}\label{senthm}
Under the assumptions of Theorem \ref{exi}, let $m\in U_M$ and $(u,\rho)=(u^m,\rho^m)$ be the corresponding solution to the state system \re{eqn1} -- \re{eqn6}. Let $m_\varepsilon = m+\varepsilon h$ for any $m\in U_M$ and $h\in L^\infty(0,T)$ such that $m+\varepsilon h \in U_M$. For $T$ sufficiently small, the mapping $m\rightarrow (u^m,\rho^m)$ is differentiable in the following sense: there exists $(v,\eta) \in W^{2,1,p}((0,1)\times(0,T)) \times C^1[0,T]$ such that
\begin{eqnarray*}
        &\displaystyle v^\varepsilon \rightarrow v \text{ in $C^{1,\frac12}([0,1]\times[0,T])$}, \quad v^\varepsilon_{\xi\xi}\rightharpoonup v_{\xi\xi}, v^\varepsilon_t\rightharpoonup v_t  \text{ weakly in $L^2((0,1)\times(0,T))$},
    \end{eqnarray*}
$$\eta^\varepsilon \rightarrow \eta \text{ uniformly in $C[0,T]$ and $(\eta^\varepsilon)'\rightharpoonup \eta'$ weakly in $L^2(0,T)$},$$
as $\varepsilon \rightarrow 0$,
where $v^\varepsilon = \frac{u^{m+\varepsilon h}-u^m}{\varepsilon}$ and $\eta^\varepsilon = \frac{\rho^{m+\varepsilon h}-\rho^m}{\varepsilon}$.
Moreover, the sensitivity functions $(v,\eta)$ satisfy:
    \begin{equation}\label{sys2}
    \left\{
    \begin{aligned}
        &v_t - \frac{\rho'}{\rho}\xi v_{\xi} - \frac{1}{\rho^2} v_{\xi\xi}  +(1+m)v = \frac{\eta'}{\rho}\xi u_\xi -\frac{\rho'\eta}{\rho^2}\xi u_\xi - \frac{2\eta}{\rho^3}u_{\xi\xi} -u h \hspace{1em} &0<\xi<1,\;0<t<T,\\
        & v = 0 \hspace{1em} &\xi = 1, \; 0<t<T,\\
        & v_\xi = 0 \hspace{1em} &\xi=0,\;0<t<T,\\
        & v = 0 \hspace{1em} &0<\xi<1,\; t=0,\\
        & \eta'=\eta \int_0^1 \mu (u-\widetilde{\sigma})  \dif \xi  + \rho \int_0^1 \mu v\dif \xi \hspace{1em} & 0<t<T,\\
        & \eta = 0 \hspace{1em} & t=0.
    \end{aligned}
    \right.
\end{equation}
\end{thm}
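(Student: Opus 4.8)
The plan is to prove differentiability of the control-to-state map by the same contraction-mapping machinery that underlies Theorem \ref{exi}, combined with uniform-in-$\varepsilon$ a priori estimates and weak compactness. First I would write down the system satisfied by the difference quotients $v^\varepsilon = (u^{m+\varepsilon h}-u^m)/\varepsilon$ and $\eta^\varepsilon = (\rho^{m+\varepsilon h}-\rho^m)/\varepsilon$. Subtracting the state equations \re{eqn1}--\re{eqn6} for $m_\varepsilon$ and $m$ and dividing by $\varepsilon$, one gets a linear parabolic problem for $v^\varepsilon$ coupled with an ODE for $\eta^\varepsilon$, with coefficients depending on $u^{m_\varepsilon}, \rho^{m_\varepsilon}, u^m, \rho^m$ and inhomogeneous terms of the form $\frac{(\eta^\varepsilon)'}{\rho^{m_\varepsilon}}\xi u^{m_\varepsilon}_\xi$, curvature-type terms, and $-u^{m_\varepsilon}h$ (the last coming from $\frac{m_\varepsilon u^{m_\varepsilon} - m u^m}{\varepsilon} = h u^{m_\varepsilon} + m v^\varepsilon$). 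The difference quotients $(\rho^{m_\varepsilon}-\rho^m)/\varepsilon$ in the coefficients are handled using the Lipschitz dependence of the state on $m$, which follows from estimates analogous to \re{thm1-6} in the proof of Theorem \ref{exi} (with $\|s_1-s_2\|$ replaced by $\|m_\varepsilon - m\|_{L^\infty} = \varepsilon\|h\|_{L^\infty}$), so that $\|u^{m_\varepsilon}-u^m\| \le C\varepsilon\|h\|$ and $\|\rho^{m_\varepsilon}-\rho^m\|\le C\varepsilon\|h\|$, making the coefficients converge as $\varepsilon\to 0$.

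Next I would establish uniform bounds on $(v^\varepsilon, \eta^\varepsilon)$ independent of $\varepsilon$: $\|v^\varepsilon\|_{W^{2,1,p}((0,1)\times(0,T))}\le C$ and $\|\eta^\varepsilon\|_{C^1[0,T]}\le C$. This requires a closed loop of estimates just like in Theorem \ref{exi}: the $L^p$ parabolic estimate for the $v^\varepsilon$ equation bounds $\|v^\varepsilon\|_{W^{2,1,p}}$ in terms of the right-hand side, which contains $(\eta^\varepsilon)'$; the ODE for $\eta^\varepsilon$ bounds $\|\eta^\varepsilon\|_{C^1[0,T]}$ in terms of $\int_0^1 \mu v^\varepsilon\,d\xi$ (via Gronwall, as $\eta^\varepsilon(0)=0$) and hence in terms of $\|v^\varepsilon\|_{L^\infty}$; and for $T$ sufficiently small one closes the loop with a contraction-type gain of a power of $T$, exactly as in the derivation of \re{thm1-6} and the estimates that follow it. The $W^{2,1,p}$ bound plus the compact embedding $W^{2,1,p}\hookrightarrow C^{1+\alpha,(1+\alpha)/2}$ then gives, on a subsequence, $v^\varepsilon\to v$ in $C^{1,1/2}$ with $v^\varepsilon_{\xi\xi}, v^\varepsilon_t$ converging weakly in $L^2$, and Arzel\`a--Ascoli applied to $\eta^\varepsilon$ (bounded in $C^1$) gives $\eta^\varepsilon\to\eta$ uniformly with $(\eta^\varepsilon)'\rightharpoonup\eta'$ weakly in $L^2$ — this is the same passage-to-the-limit argument as in the proof of Theorem \ref{exi-oc}.

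Then I would pass to the limit in the $v^\varepsilon$--$\eta^\varepsilon$ system. Using the strong convergence $u^{m_\varepsilon}\to u^m$, $\rho^{m_\varepsilon}\to\rho^m$ (in the appropriate $C^{1,1/2}$ and $C^1$ norms), the weak convergences above, and the uniform lower bound $\rho\ge\rho_0/2$ to control denominators, each term converges in the distributional sense, yielding that $(v,\eta)$ solves \re{sys2}. To upgrade convergence to the whole sequence rather than a subsequence, I would note that \re{sys2} is itself a \emph{linear} coupled problem of exactly the structure treated in Theorem \ref{exi}, hence has a \emph{unique} solution in $W^{2,1,p}\times C^1$ for $T$ small; since every subsequential limit of $(v^\varepsilon,\eta^\varepsilon)$ solves this same problem, the full family converges. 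The last point — the differentiability claim in the stated ``sense'' — is then just the combination of these convergences.

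The main obstacle I anticipate is closing the a priori estimate loop uniformly in $\varepsilon$, specifically controlling the term $\frac{(\eta^\varepsilon)'}{\rho^{m_\varepsilon}}\xi u^{m_\varepsilon}_\xi$ on the right side of the $v^\varepsilon$ equation: $(\eta^\varepsilon)'$ is itself only bounded in $L^\infty$, and the parabolic $L^p$ estimate needs its right-hand side in $L^p$, so one must be careful to bound $\|(\eta^\varepsilon)'\|_{L^\infty(0,T)}$ via the ODE before invoking the parabolic estimate, and then absorb the resulting $\|v^\varepsilon\|$-dependence using smallness of $T$. The curvature-type term $-\frac{2\eta^\varepsilon}{\rho^3}u_{\xi\xi}$ (which involves $u^{m_\varepsilon}_{\xi\xi}\in L^p$, not $L^\infty$) also needs the product $\eta^\varepsilon u^{m_\varepsilon}_{\xi\xi}$ estimated in $L^p$ using $\eta^\varepsilon\in L^\infty$ — fine, but it must be tracked carefully. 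Everything else is a routine repetition of the arguments already carried out in Sections 3 and 4.
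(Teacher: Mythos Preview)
Your proposal is correct and follows essentially the same approach as the paper: derive the coupled difference-quotient system for $(v^\varepsilon,\eta^\varepsilon)$, close a loop of $L^p$ parabolic estimates for $v^\varepsilon$ against Gronwall-type ODE bounds for $\eta^\varepsilon$ using smallness of $T$, then extract limits via the compact embedding and Arzel\`a--Ascoli and pass to the limit exactly as in Theorem~\ref{exi-oc}. Your explicit remark that uniqueness of the linear limiting system \re{sys2} upgrades subsequential to full-sequence convergence is a point the paper leaves implicit, and your identification of the $(\eta^\varepsilon)'$ and $u_{\xi\xi}$ terms as the ones requiring care matches precisely where the paper spends its effort.
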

\begin{proof}
    Denote $(u^\varepsilon, \rho^\varepsilon) = (u^{m+\varepsilon h},\rho^{m+\varepsilon h})$, which satisfies 
    \begin{equation}\label{sys-epi}
    \left\{
    \begin{aligned}
        &u^\varepsilon_t - \frac{(\rho^\varepsilon)'}{\rho^\varepsilon}\xi u^\varepsilon_{\xi} - \frac{1}{(\rho^\varepsilon)^2} u^\varepsilon_{\xi\xi}  =-(1+m+\varepsilon h)u^\varepsilon  \hspace{2em} &0<\xi<1,\;0<t<T,\\
        & u^\varepsilon = 1 \hspace{2em} &\xi = 1, \; 0<t<T,\\
        & u^\varepsilon_\xi = 0 \hspace{2em} &\xi=0,\; 0<t<T,\\
        & u^\varepsilon = u_0 \hspace{2em} &0<\xi<1,\; t=0,\\
        & (\rho^\varepsilon)'=\rho^\varepsilon \int_0^1 \mu(u^\varepsilon-\widetilde{\sigma}) \dif \xi \hspace{2em} & 0<t<T,\\
        &\rho^\varepsilon = \rho_0 \hspace{2em} & t=0.
    \end{aligned}
    \right.
\end{equation}
Let $(u,\rho)=(u^m,\rho^m)$, where
    \begin{equation}\label{sys-u}
    \left\{
    \begin{aligned}
        &u_t - \frac{\rho'}{\rho}\xi u_{\xi} - \frac{1}{\rho^2} u_{\xi\xi}  =-(1+m)u  \hspace{2em} &0<\xi<1,\;0<t<T,\\
        & u = 1 \hspace{2em} &\xi = 1, \; 0<t<T,\\
        & u_\xi = 0 \hspace{2em} &\xi=0,\; 0<t<T,\\
        & u = u_0 \hspace{2em} &0<\xi<1,\; t=0,\\
        & \rho'=\rho \int_0^1 \mu(u-\widetilde{\sigma}) \dif \xi \hspace{2em} & 0<t<T,\\
        &\rho = \rho_0 \hspace{2em} & t=0.
    \end{aligned}
    \right.
\end{equation}
Note that it follows from Theorem \ref{exi} that there is a constant $C>0$ independent of $\varepsilon$ such that
\begin{eqnarray}
\label{sen-est}
    &\displaystyle\|u^\varepsilon\|_{W^{2,1,p}((0,1)\times(0,T))} + \|\rho^\varepsilon\|_{C^1[0,T]}\le C,\\
\label{sen-u}
    &\displaystyle\|u\|_{W^{2,1,p}((0,1)\times(0,T))} + \|\rho\|_{C^1[0,T]}\le C,\\
    &\displaystyle 0<u^\varepsilon,u<1,\hspace{2em} 0< \rho_0 e^{-\mu\widetilde{\sigma} T} \le  \rho^\varepsilon,\rho \le \rho_0 e^{\mu(1-\widetilde{\sigma})T}.
\end{eqnarray}
%As $\varepsilon\rightarrow 0$, $m+\varepsilon h \rightarrow m$.
Reasoning as in the proof of Theorem \ref{exi-oc} and by the uniqueness of the solution $(u,\rho)=(u^m,\rho^m)$, we have, as $\varepsilon\rightarrow 0$,
\begin{eqnarray}
        \label{uepi}\displaystyle &u^\varepsilon \rightarrow u \text{ in $C^{1,\frac12}([0,1]\times[0,T])$}, \quad u^\varepsilon _{\xi\xi}\rightharpoonup u_{\xi\xi}, u^\varepsilon _t\rightharpoonup u_t  \text{ weakly in $L^2((0,1)\times(0,T))$},\\
        \label{repi}\displaystyle &\rho^\varepsilon  \rightarrow \rho \text{ uniformly in $C[0,T]$}, \quad (\rho^\varepsilon)' \rightharpoonup \rho'  \text{ weakly in $L^2(0,T)$}.
    \end{eqnarray}

Recall that 
\begin{equation*}
    v^\varepsilon = \frac{u^{m+\varepsilon h}-u^m}{\varepsilon}=\frac{u^\varepsilon-u}{\varepsilon},\hspace{2em}\eta^\varepsilon = \frac{\rho^{m+\varepsilon h}-\rho^m}{\varepsilon}=\frac{\rho^\varepsilon-\rho}{\varepsilon},
\end{equation*}
and it is noteworthy that
\begin{eqnarray}
    &&\label{sen1} \frac{1}{\varepsilon}\Big(\frac{(\rho^\varepsilon)'}{\rho^\varepsilon} - \frac{\rho'}{\rho}\Big) = \frac{1}{\varepsilon}\frac{(\rho^\varepsilon)'\rho-\rho'\rho^\varepsilon}{\rho\rho^\varepsilon}=\Big(\frac{\rho^\varepsilon-\rho}{\varepsilon}\Big)'\frac{1}{\rho^\varepsilon}-\frac{\rho'}{\rho\rho^\varepsilon}\Big(\frac{\rho^\varepsilon-\rho}{\varepsilon}\Big)=\frac{(\eta^\varepsilon)'}{\rho^\varepsilon} - \frac{\rho'\eta^\varepsilon}{\rho\rho^\varepsilon},\\
    &&\label{sen2}\frac{1}{\varepsilon}\Big(\frac{1}{(\rho^\varepsilon)^2}-\frac{1}{\rho^2}\Big)= \frac{1}{\varepsilon}\frac{\rho^2-(\rho^\varepsilon)^2}{\rho^2(\rho^\varepsilon)^2} = -\frac{\rho^\varepsilon+\rho}{\rho^2(\rho^\varepsilon)^2}\Big(\frac{\rho^\varepsilon-\rho}{\varepsilon}\Big) = -\frac{(\rho^\varepsilon+\rho) \eta^\varepsilon}{\rho^2(\rho^\varepsilon)^2},\\
    &&\label{sen3}\frac{1}{\varepsilon}\int_0^1 \mu(u^\varepsilon-u)\dif \xi = \int_0^1 \mu\Big(\frac{u^\varepsilon-u}{\varepsilon}\Big) \dif \xi = \int_0^1 \mu v^\varepsilon \dif \xi.
\end{eqnarray}
Using \re{sen1} -- \re{sen3}, subtracting \re{sys-u} from \re{sys-epi} and dividing by $\varepsilon$, we obtain the following system:
    \begin{equation}\label{sen}
    \left\{
    \begin{aligned}
        &
        \begin{aligned}
&v^\varepsilon_t - \frac{(\rho^\varepsilon)'}{\rho^\varepsilon}\xi v^\varepsilon_{\xi} - \frac{1}{(\rho^\varepsilon)^2} v^\varepsilon_{\xi\xi}  +(1+m)v^\varepsilon = \\ &\hspace{6em} \frac{(\eta^\varepsilon)'}{\rho^\varepsilon}\xi u_\xi -\frac{\rho'\eta^\varepsilon}{\rho\rho^\varepsilon}\xi u_\xi - \frac{(\rho+\rho^\varepsilon)\eta^\varepsilon}{\rho^2(\rho^\varepsilon)^2}u_{\xi\xi} -u^\varepsilon h \end{aligned}\hspace{1em} &0<\xi<1,\;0<t<T,\\
        & v^\varepsilon = 0 \hspace{1em} &\xi = 1, \; 0<t<T,\\
        & v^\varepsilon_\xi = 0 \hspace{1em} &\xi=0,\;0<t<T,\\
        & v^\varepsilon = 0 \hspace{1em} &0<\xi<1,\; t=0,\\
        & (\eta^\varepsilon)'=\eta^\varepsilon  \int_0^1 \mu (u^\varepsilon-\widetilde{\sigma})  \dif \xi  + \rho \int_0^1 \mu v^\varepsilon \dif \xi \hspace{1em} & 0<t<T,\\
        & \eta^\varepsilon = 0 \hspace{1em} & t=0.
    \end{aligned}
    \right.
\end{equation}
%In what follows, we will prove the above system admits a unique solution $(v,\eta)$ when $T$ is sufficiently small. %Assume without loss of generality that $T<1$.
By \re{sen-est}, the coefficients on the left-hand side of the PDE for $v^\epsilon$ are bounded in $L^\infty((0, 1)\times (0, T))$. Therefore, by artificially extending the time interval to $[0,1]$ as mentioned in the proof of Theorem \ref{exi}, applying the standard $L^p$ estimates for parabolic equations, and utilizing the embedding theorem, we can find positive constants $C_3(T)$ and $C_4(T)$ such that
\begin{equation}
    \label{v-est}
    \begin{split}
    \|v^\varepsilon\|_{C^{1+\alpha,(1+\alpha)/2}([0,1]\times[0,T])} &\le C\|v^\varepsilon\|_{W^{2,1,p}((0,1)\times(0,T))}\\
    &\le C(T)\Big\|\frac{(\eta^\varepsilon)'}{\rho^\varepsilon}\xi u_\xi -\frac{\rho'\eta^\varepsilon}{\rho\rho^\varepsilon}\xi u_\xi - \frac{(\rho+\rho^\varepsilon)\eta^\varepsilon}{\rho^2(\rho^\varepsilon)^2}u_{\xi\xi} -u^\varepsilon h\Big\|_{L^p((0,1)\times(0,T))}\\
    &\le C_3(T) \|\eta^\varepsilon\|_{C^1[0,T]} + C_4(T) \|h\|_{L^\infty(0,T)}.
    \end{split}
\end{equation}
Here and hereafter, $C_i(T)$ represent positive constants that are independent of $\varepsilon$ and depend on $T$; these constants are bounded for $T$ within any bounded set.
We then multiply the differential equation for $\eta^\varepsilon$ by $\eta^\varepsilon$, from which we derive 
\begin{equation*}
\begin{split}
    \frac12 \frac{\dif }{\dif t}(\eta^\varepsilon)^2 & = (\eta^\varepsilon)^2 \int_0^1 \mu(u^\varepsilon-\widetilde{\sigma}) \dif \xi + \rho\eta^\varepsilon \int_0^1 \mu v^\varepsilon\dif \xi\\
    &\le \frac{\mu(1-\widetilde{\sigma})}{3}(\eta^\varepsilon)^2 + \frac12\bigg((\eta^\varepsilon)^2 + \mu^2\|\rho\|_{L^\infty(0,T)}^2\|v^\varepsilon\|^2_{L^\infty((0,1)\times(0,T))}\bigg)\\
    &\le \frac{C_5}2 (\eta^\varepsilon)^2 + \frac{C_6(T)}2 \|v^\varepsilon\|_{L^\infty((0,1)\times(0,T))}^2.
\end{split}
\end{equation*}
By applying the Gronwall inequality, we deduce that
\begin{equation}\label{rho-est}
\begin{split}
    |\eta^\varepsilon(t)| &\le \sqrt{C_6(T) e^{C_5 T}} T^{1/2} \|v^\varepsilon\|_{L^\infty((0,1)\times(0,T))}
    \hspace{2em}\text{for $0\le t\le T$}.
    \end{split}
\end{equation}
%Here and in the following discussion, we use $C_i(T)$ to denote constants that are independent of $\varepsilon$ and vary with $T$ in a manner that it is bounded when $T$ is bounded. 
As in the proof of Theorem \ref{exi}, we can use the equation for $\eta^\varepsilon$ and combine it with \re{rho-est} to derive the estimate for $(\eta^\varepsilon)'$, namely, for $0\le t\le T$,
\begin{equation}
    \label{rhod-est}
    \begin{split}
    |(\eta^\varepsilon)'(t)| &\le \mu(1-\widetilde{\sigma})\|\eta^\varepsilon\|_{L^\infty(0,T)} + \|\rho\|_{L^\infty(0,T)}\mu \|v^\varepsilon\|_{L^\infty((0,1)\times(0,T))} \\
    &\le \big(\mu(1-\widetilde{\sigma})\sqrt{C_6(T) e^{C_5 T}}T^{1/2}+\|\rho\|_{L^\infty(0,T)}\mu\big)\|v^\varepsilon\|_{L^\infty((0,1)\times(0,T))}.
    \end{split}
\end{equation}
Therefore, there exists positive constants $C_7(T)$ and $C_8(T)$ such that
\begin{equation*}
\begin{split}
    \|\eta^\varepsilon\|_{C^1[0,T]} = \|\eta^\varepsilon\|_{L^\infty(0,T)} + \|(\eta^\varepsilon)'\|_{L^\infty(0,T)} &\le C_7(T) \|v^\varepsilon\|_{L^\infty((0,1)\times(0,T))}\\
    &\le C_8(T) T^{1/2} \|v^\varepsilon\|_{C^{1+\alpha,(1+\alpha)/2}([0,1]\times[0,T])}.
    \end{split}
\end{equation*}
Substituting it into \re{v-est}, we have
\begin{equation}
    \|v^\varepsilon\|_{C^{1+\alpha,(1+\alpha)/2}([0,1]\times[0,T])} \le C_3(T) C_8(T) T^{1/2}   \|v^\varepsilon\|_{C^{1+\alpha,(1+\alpha)/2}([0,1]\times[0,T])} + C_4(T) \|h\|_{L^\infty(0,T)}.
\end{equation}
If $T>0$ is sufficiently small such that $C_3(T) C_8(T) T^{1/2} <1$, then 
\begin{eqnarray}
    \label{v-est1}& \displaystyle\ \|v^\varepsilon\|_{C^{1+\alpha,(1+\alpha)/2}([0,1]\times[0,T])}\le \frac{C_4(T)}{1-C_3(T) C_8(T) T^{1/2}} \|h\|_{L^\infty(0,T)} \le C,\\
    \label{rho-est1}&\displaystyle\ \|\eta^\varepsilon\|_{C^1[0,T]} \le \frac{C_4(T) C_8(T) T^{1/2}}{1-C_3(T) C_8(T) T^{1/2}}\|h\|_{L^\infty(0,T)} \le C.
\end{eqnarray}
The estimates \re{v-est1} and \re{rho-est1} justify the uniform boundedness of $v^\varepsilon$ and $\eta^\varepsilon$. Therefore, as $\varepsilon\rightarrow 0$, there exist $v\in W^{2,1,p}((0,1)\times(0,T))$ and $\eta \in W^{1,p}(0,T)$ such that
$$
v^\varepsilon \rightharpoonup v\; \text{ weakly in $W^{2,1,p}((0,1)\times(0,T))$},
$$
$$\eta^\varepsilon \rightarrow \eta \text{ uniformly in $C[0,T]$}, \quad (\eta^\varepsilon)' \rightharpoonup \eta'  \text{ weakly in $L^p(0,T)$}$$
Similar to the proof of Theorem \ref{exi-oc}, we have
\begin{eqnarray*}
        &\displaystyle v^\varepsilon \rightarrow v \text{ in $C^{1,\frac12}([0,1]\times[0,T])$}, \quad v^\varepsilon_{\xi\xi}\rightharpoonup v_{\xi\xi}, v^\varepsilon_t\rightharpoonup v_t  \text{ weakly in $L^2((0,1)\times(0,T))$}.
    \end{eqnarray*}
Furthermore, combining with \re{uepi} and \re{repi}, we also establish
\begin{eqnarray*}
        &\displaystyle\frac{(\rho^\varepsilon)'}{\rho^\varepsilon}\xi v^\varepsilon_\xi \rightharpoonup \frac{\rho'}{\rho}\xi v_\xi,\; \frac{1}{(\rho^\varepsilon)^2}v^\varepsilon_{\xi\xi}\rightharpoonup \frac{1}{\rho^2}v_{\xi\xi},\; \frac{(\eta^\varepsilon)'}{\rho^\varepsilon}\xi u_\xi \rightharpoonup \frac{\eta'}{\rho}\xi u_\xi \quad \text{ weakly in $L^2((0,1)\times(0,T))$},\\
        &\displaystyle (1+m)v^\varepsilon \rightarrow (1+m)v,\; \frac{\rho'\eta^\varepsilon}{\rho \rho^\varepsilon}\xi u_\xi \rightarrow \frac{\rho'\eta}{\rho^2}\xi u_\xi,\; \frac{(\rho+\rho^\varepsilon)\eta^\varepsilon}{\rho^2(\rho^\varepsilon)^2}u_{\xi\xi}\rightarrow \frac{2 \eta}{\rho^3}u_{\xi\xi}\quad  \text{ in $L^2((0,1)\times(0,T))$}.
    \end{eqnarray*}
Therefore, taking the limit as $\varepsilon \to 0$ in \re{sen} justifies that the pair $(v,\rho)$ solves the system \re{sys2}. 
Additionally, it follows from the equation of $\eta$ in the system \re{sys2} that $\eta\in C^1[0,T]$.
\end{proof}

To derive the optimality system and characterize the optimal control, it is necessary to define adjoint variables and the adjoint operator associated with the $(v,\eta)$ system. With this in mind, we rewrite the system \re{sys2} as 
\begin{equation}\label{sys-matrix}
    \mathcal{L}\begin{pmatrix}
        v \\ \eta
    \end{pmatrix}  = \begin{pmatrix}
        -uh \\ 0
    \end{pmatrix},
\end{equation}
where 
{\color{black}\begin{equation}
    \label{adj:L1}
    \mathcal{L}\begin{pmatrix}
        v \\ \eta
    \end{pmatrix} = \begin{pmatrix}
        L_1 v \\ L_2 \eta 
    \end{pmatrix} + 
        \mathcal{B} \begin{pmatrix}
        v \\ \eta
    \end{pmatrix},
\end{equation}
and $\mathcal{B}$ is the operator consisting of cross terms.
 From \re{sys2}, we have
\begin{equation}
    \label{adj:L2}
    \begin{pmatrix}
        L_1 v \\ L_2 \eta 
    \end{pmatrix} = \begin{pmatrix}
        v_t - \frac{\rho'}{\rho}\xi v_{\xi} - \frac{1}{\rho^2} v_{\xi\xi}  +(1+m)v \\
        \eta' - \eta \int_0^1 \mu (u-\widetilde{\sigma})  \dif \xi  
    \end{pmatrix},
\end{equation}
and
\begin{equation}
\begin{split}
    \label{adj:L3}
    \mathcal{B}\begin{pmatrix}
        v \\ \eta
    \end{pmatrix} &= \begin{pmatrix}
        - \frac{\eta'}{\rho}\xi u_\xi +\frac{\rho'\eta}{\rho^2}\xi u_\xi + \frac{2\eta}{\rho^3}u_{\xi\xi} \\
        - \rho \int_0^1 \mu v \dif \xi 
    \end{pmatrix} \\
    &= \begin{pmatrix}
        -\frac{\eta}{\rho}\big(\int_0^1 \mu(u-\widetilde{\sigma})\dif \xi\big)\xi u_\xi -\big(\int_0^1\mu v\dif \xi\big)\xi u_\xi + \frac{\eta}{\rho}\big(\int_0^1 \mu(u-\widetilde{\sigma})\dif\xi\big)\xi u_\xi + \frac{2\eta}{\rho^3}u_{\xi\xi}\\
        - \rho \int_0^1 \mu v \dif \xi
    \end{pmatrix}\\
    &= \begin{pmatrix}
         -\big(\int_0^1\mu v\dif \xi\big)\xi u_\xi  + \frac{2\eta}{\rho^3}u_{\xi\xi}\\
        - \rho \int_0^1 \mu v \dif \xi
    \end{pmatrix}.
    \end{split}
\end{equation}
Note that in defining the operator $\mathcal{B}$, we have substituted $\eta'$ with its equation from the system \re{sys2} and $\rho'$ with its expression from the equation \re{eqn4} to eliminate the derivatives.}

For $f_1,f_2 \in L^2((0,1)\times(0,T))$ and $g_1, g_2 \in L^2(0,T)$, we define the following inner product
\begin{equation}
    \label{innerp}
    \langle \begin{pmatrix}
        f_1 \\ g_1
    \end{pmatrix}, \begin{pmatrix}
        f_2 \\ g_2
    \end{pmatrix}\rangle = \int_0^T \int_0^1 f_1 f_2 \dif \xi \dif t + \int_0^T g_1 g_2 \dif t.
\end{equation}

Let $(w,\lambda)$ be the adjoint variables corresponding to $(v,\eta)$. The adjoint PDE system is defined as
\begin{equation}
\displaystyle
    \label{adj:L4}
    \mathcal{L}^*\begin{pmatrix}
        w \\ \lambda
    \end{pmatrix} = \begin{pmatrix}
        \frac{\partial (\text{integrand of $J$})}{\partial \sigma} \\ \frac{\partial (\text{integrand of $J$})}{\partial \rho}
    \end{pmatrix} = \begin{pmatrix}
        0 \\ 1
    \end{pmatrix},
\end{equation}
where 
\begin{equation}
    \label{adj:L5}
    \mathcal{L}^*\begin{pmatrix}
        w \\ \lambda
    \end{pmatrix} = \begin{pmatrix}
        L_1^* w \\ L_2^* \lambda
    \end{pmatrix} + \mathcal{B}^*\begin{pmatrix}
        w \\ \lambda
    \end{pmatrix},
\end{equation}
and we require
\begin{equation}
    \label{adj:L6}
    \langle \mathcal{L}\begin{pmatrix}
        v \\ \eta
    \end{pmatrix}, \begin{pmatrix}
        w \\ \lambda 
    \end{pmatrix}\rangle = \langle \begin{pmatrix}
        v \\ \eta 
    \end{pmatrix}, \mathcal{L}^* \begin{pmatrix}
        w \\ \lambda
    \end{pmatrix}\rangle.
\end{equation}
Since the inner product \re{innerp} is linear, the requirement \re{adj:L6} is equivalent to
\begin{eqnarray}
    &&\displaystyle \label{req1} \langle \begin{pmatrix}
        L_1 v \\ L_2 \eta
    \end{pmatrix}, \begin{pmatrix}
        w \\ \lambda 
    \end{pmatrix}\rangle = \langle \begin{pmatrix}
        v \\ \eta 
    \end{pmatrix}, \begin{pmatrix}
        L_1^* w \\ L_2^* \lambda
    \end{pmatrix}\rangle,\\
    && \label{req2} \displaystyle \langle \mathcal{B}\begin{pmatrix}
        v \\ \eta
    \end{pmatrix}, \begin{pmatrix}
        w \\ \lambda 
    \end{pmatrix}\rangle = \langle \begin{pmatrix}
        v \\ \eta 
    \end{pmatrix}, \mathcal{B}^* \begin{pmatrix}
        w \\ \lambda
    \end{pmatrix}\rangle.
\end{eqnarray}
In the following, we shall use \re{req1} and \re{req2} to find $L_1^*$, $L_2^*$, and $\mathcal{B}^*$. Using integration by parts, it is easy to verify
\begin{equation}\label{operatorL}
\begin{split}
&\langle \begin{pmatrix}
        L_1 v \\ L_2 \eta
    \end{pmatrix}, \begin{pmatrix}
        w \\ \lambda 
    \end{pmatrix}\rangle \\
    &\hspace{1em}= \int_0^T \int_0^1 w\Big(v_t - \frac{\rho'}{\rho}\xi v_{\xi} - \frac{1}{\rho^2} v_{\xi\xi}  +(1+m)v\Big)\dif \xi \dif t + \int_0^T \lambda\Big(\eta' - \eta \int_0^1 \mu (u-\widetilde{\sigma})\dif \xi\Big)\dif t\\
    &\hspace{1em}=\int_0^T\int_0^1 v\Big(-w_t + \frac{\rho'}{\rho}(\xi w)_\xi -\frac{1}{\rho^2}w_{\xi\xi} + (1+m)w\Big)\dif \xi \dif t + \int_0^T \eta\Big(-\lambda' - \lambda \int_0^1 \mu(u-\widetilde{\sigma})\dif \xi \Big)\dif t,
    \end{split}
\end{equation}
if the following boundary and terminal conditions are provided:
\begin{eqnarray*}
    \displaystyle&&
    w(1,t)=0\quad  \text{ and }\quad w_\xi(0,t)=0 \hspace{2em}0<t<T,\\
    \displaystyle&&
    w(x,T) = 0\quad \text{ and } \quad \lambda(T)=0 \hspace{2em}0<x<1.
\end{eqnarray*}
Therefore, we obtain
\begin{equation}
    \label{adj:L7}
    \begin{pmatrix}
        L_1^* w \\ L_2^* \lambda
    \end{pmatrix} = \begin{pmatrix}
        -w_t + \frac{\rho'}{\rho}(\xi w)_\xi -\frac{1}{\rho^2}w_{\xi\xi} + (1+m)w \\
        -\lambda' - \lambda \int_0^1 \mu(u-\widetilde{\sigma})\dif \xi
    \end{pmatrix}.
\end{equation}
In deriving $\mathcal{B}^*$ from \re{adj:L3} and \re{req2}, we rearrange the terms and use integration by parts on the integral $\int_0^1 w u_{\xi\xi}\dif \xi$ to obtain
\begin{equation}\label{operatorB}
\begin{split}
    &\langle \mathcal{B}\begin{pmatrix}
        v \\ \eta
    \end{pmatrix},\begin{pmatrix}
        w \\ \lambda
    \end{pmatrix}\rangle \\
    &\hspace{1em}= \int_0^T \int_0^1 w\Big(-\big(\int_0^1\mu v\dif \chi\big)\xi u_\xi  + \frac{2\eta}{\rho^3}u_{\xi\xi}\Big) \dif \xi \dif t + \int_0^T \lambda \Big(-\rho \int_0^1 \mu v \dif \xi \Big)\dif t\\
    &\hspace{1em}=\int_0^T  \big(\int_0^1 \mu v\dif \chi\big)\big(-\int_0^1w\xi u_\xi \dif \xi \big)\dif t + \int_0^T \frac{2\eta}{\rho^3}\int_0^1 w u_{\xi\xi}\dif \xi \dif t - \int_0^T\int_0^1 \lambda \rho \mu v\dif \xi \dif t\\
    &\hspace{1em}= \int_0^T \int_0^1 v \Big(-\mu \int_0^1 w \xi u_\xi \dif \xi \Big) \dif \chi \dif t\, -\, \int_0^T \frac{2\eta}{\rho^3} \int_0^1 w_\xi u_\xi \dif \xi \dif t \, - \,  \int_0^T \int_0^1 v\Big(\mu \lambda \rho \Big)\dif \xi \dif t\\
    &\hspace{1em}= \int_0^T \int_0^1 v\Big(-\mu \int_0^1 w \xi u_\xi \dif \xi  - \mu \lambda \rho \Big) \dif \chi \dif t \, + \, \int_0^T \eta \Big(-\frac{2} {\rho^3}\int_0^1 w_\xi u_\xi \dif \xi \Big)\dif t.
\end{split}
\end{equation}
Thus, $\mathcal{B}^*$ is defined as
\begin{equation}
    \label{adj:L8}
    \mathcal{B}^* \begin{pmatrix}
        w \\ \lambda 
    \end{pmatrix} = \begin{pmatrix}
        -\mu \int_0^1 w \xi u_\xi \dif \xi  - \mu \lambda \rho \\
        -\frac{2} {\rho^3}\int_0^1 w_\xi u_\xi \dif \xi
    \end{pmatrix}.
\end{equation}

By combining \re{adj:L4} with \re{adj:L5}, \re{adj:L7}, and \re{adj:L8}, we derive the adjoint PDE system. The existence of a solution for this system is guaranteed by the following theorem:

\begin{thm}\label{thm:adj}
Suppose the assumptions of Theorem \ref{exi} are satisfied. Let $m\in U_M$ and $(u,\rho)=(u^m,\rho^m)$ be the corresponding solution to the system \re{eqn1} -- \re{eqn6}. For sufficiently small $T>0$ and any $p>1$, there exist $w\in W^{2,1,p}((0,1)\times(0,T))$ and $\lambda \in C^1[0,T]$ such that
\begin{equation}\label{adj}
    \left\{
    \begin{aligned}
        & - w_t + \frac{\rho'}{\rho}(\xi w)_\xi -\frac{1}{\rho^2}w_{\xi \xi} + (1+m)w-\mu\int_0^1 w \xi u_\xi \dif \xi-\mu\lambda \rho =0 \hspace{1em} &0<\xi<1,\;0<t<T,\\
        & w = 0 \hspace{2em} &\xi = 1, \; 0<t<T,\\
        & w_\xi = 0 \hspace{2em} &\xi=0,\;0<t<T,\\
        & w = 0 \hspace{2em} &0<\xi<1,\; t=T,\\
        & - \lambda' - \lambda \int_0^1 \mu(u-\widetilde{\sigma})\dif \xi  - \frac{2}{\rho^3}\int_0^1 w_\xi u_\xi\dif \xi =1  \hspace{2em} & 0<t<T,\\
        & \lambda = 0 \hspace{2em}& t=T.
    \end{aligned}
    \right.
\end{equation}
\end{thm}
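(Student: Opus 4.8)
The plan is to prove existence of a solution to the adjoint system \re{adj} by the same contraction-mapping strategy used in Theorem \ref{exi}, exploiting the fact that $(u,\rho)$ is now a \emph{fixed}, known pair with the regularity and bounds from Theorem \ref{exi}; in particular $\rho\in C^1[0,T]$ with $\rho(t)\ge\rho_0 e^{-\mu\widetilde\sigma T}>0$, $\|\rho\|_{C^1[0,T]}\le C$, and $u\in C^{1+\alpha,(1+\alpha)/2}([0,1]\times[0,T])$ with $0<u<1$, so all the coefficients $\rho'/\rho$, $1/\rho^2$, $\xi u_\xi$, $u_{\xi\xi}$, $1/\rho^3$ appearing in \re{adj} are bounded in $L^\infty$ (or $L^p$). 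The backward-in-time structure is handled by the time reversal $\tau=T-t$, which turns \re{adj} into a standard forward parabolic problem for $w$ coupled to a forward ODE for $\lambda$, both with zero data at $\tau=0$; this is exactly the format to which the $L^p$ parabolic theory and the embedding $W^{2,1,p}\hookrightarrow C^{1+\alpha,(1+\alpha)/2}$ used in Theorem \ref{exi} apply.

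Concretely, I would set up a map $\mathcal{M}$ on a ball $\mathcal{K}=\{\lambda\in C^1[0,T];\ \lambda(T)=0,\ \|\lambda\|_{C^1[0,T]}\le K\}$ (or, symmetrically, take $w$ as the fixed-point variable — either works). Given $\lambda\in\mathcal{K}$, first solve the linear parabolic equation for $w$ in \re{adj} with the zeroth-order source $\mu\int_0^1 w\xi u_\xi\,\dif\xi$ treated as part of the operator (it is a bounded linear nonlocal lower-order term, harmless for $L^p$ estimates) and $\mu\lambda\rho$ as the forcing; $L^p$ estimates and the embedding give $\|w\|_{C^{1+\alpha,(1+\alpha)/2}}\le C\|w\|_{W^{2,1,p}}\le C(\|\lambda\|_{C^1}+1)$, and as in the proof of Theorem \ref{exi} one extends the time interval to $[0,1]$ via the constant extension of $\rho$ to make the embedding constant independent of $T$. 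Then define $\lambda$ (the image) by solving the linear ODE $-\lambda'-\lambda\int_0^1\mu(u-\widetilde\sigma)\,\dif\xi=1+\frac{2}{\rho^3}\int_0^1 w_\xi u_\xi\,\dif\xi$ with $\lambda(T)=0$; this is solved by an explicit integrating-factor formula, giving $\lambda\in C^1[0,T]$ and, via Gronwall, $\|\lambda\|_{C^1[0,T]}\le C\bigl(1+T^{1/2}\|w\|_{C^{1+\alpha,(1+\alpha)/2}}\bigr)$ — the $T^{1/2}$ coming from integrating in time against the zero terminal value, just as the factor $T^{1/2}$ appeared for $\rho_1-\rho_2$ in Theorem \ref{exi}. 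Composing, $\|\mathcal{M}\lambda\|_{C^1}\le C_1 T^{1/2}\|\lambda\|_{C^1}+C_2$, so $\mathcal{M}$ maps $\mathcal{K}$ into itself once $K$ and $T$ are chosen appropriately. For the contraction estimate, I take $\lambda_1,\lambda_2\in\mathcal{K}$, let $w_1,w_2$ be the corresponding solutions of the $w$-equation, note $w_1-w_2$ solves the same linear parabolic problem with forcing $\mu\rho(\lambda_1-\lambda_2)$ and zero data, so $\|w_1-w_2\|_{C^{1+\alpha,(1+\alpha)/2}}\le C\|\lambda_1-\lambda_2\|_{C^1}$; then $\mathcal{M}\lambda_1-\mathcal{M}\lambda_2$ solves the ODE with forcing $\frac{2}{\rho^3}\int_0^1(w_1-w_2)_\xi u_\xi\,\dif\xi$ and zero terminal value, so Gronwall yields $\|\mathcal{M}\lambda_1-\mathcal{M}\lambda_2\|_{C^1}\le C T^{1/2}\|w_1-w_2\|_{C^{1+\alpha,(1+\alpha)/2}}\le C T^{1/2}\|\lambda_1-\lambda_2\|_{C^1}$, which is a contraction for $T$ small. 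The Banach fixed-point theorem then gives a unique $(w,\lambda)\in W^{2,1,p}((0,1)\times(0,T))\times C^1[0,T]$ solving \re{adj}; bootstrapping $\lambda$ back through its own equation confirms $\lambda\in C^1[0,T]$, and $w\in W^{2,1,p}$ for every $p>1$ follows from the $L^p$ theory applied with arbitrary $p$ since the source is bounded.

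The main obstacle — and the only place requiring care beyond bookkeeping — is keeping the constants in the $L^p$ parabolic estimate and the Sobolev embedding independent of $T$ as $T\to 0$, since the contraction factor must be genuinely small; this is precisely why the $[0,1]$-extension trick from Theorem \ref{exi} is imported, extending the (time-reversed) coefficient $\rho$ and the data $u$ constantly past $T$ so that estimates are performed on the fixed cylinder $(0,1)\times(0,1)$ and then restricted. A secondary point is that the nonlocal term $\mu\int_0^1 w\xi u_\xi\,\dif\xi$ in the $w$-equation is not a pointwise lower-order term; but since $\xi u_\xi\in L^\infty$, it defines a bounded linear operator $L^p((0,1)\times(0,T))\to L^p((0,1)\times(0,T))$ of norm $O(T)$ (it loses no derivatives and its time-integral is small), so it can be moved to the right-hand side and absorbed by the same fixed-point/smallness argument, or simply kept on the left and handled by a standard perturbation of the $L^p$ resolvent estimate. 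Apart from these two technical matters the argument is a direct transcription of the proof of Theorem \ref{exi}, so I would state the lemmas on $L^p$-estimates as already available (Theorem 3.14 in \cite{hu2011blow} as cited) and present the fixed-point construction compactly.
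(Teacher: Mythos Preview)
Your proposal is correct and uses the same contraction-mapping idea as the paper, but the setup differs in two ways worth flagging.

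First, the paper iterates on the \emph{pair} $(w,\lambda)$ in the low-regularity space $L^\infty((0,1)\times(0,T))\times L^\infty(0,T)$: given $(l,\psi)$ in this ball, it solves the $w$-equation with the nonlocal term and the coupling term \emph{frozen} as known forcing $\mu\int_0^1 l\,\xi u_\xi\,\dif\xi+\mu\psi\rho$, then integrates the $\lambda$-ODE (also with $\psi$ frozen in the lower-order term), and shows $(l,\psi)\mapsto(w,\lambda)$ contracts. This sidesteps the nonlocal term entirely: at each iteration the $w$-equation is a textbook linear parabolic problem, so the $L^p$ theory applies with no further discussion. You instead iterate on $\lambda$ alone in $C^1[0,T]$ and propose to solve the full $w$-equation, nonlocal term included, in one shot for each given $\lambda$.

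Second, and relatedly, your assertion that the nonlocal map $w\mapsto\mu\int_0^1 w\,\xi u_\xi\,\dif\xi$ has $L^p\to L^p$ operator norm $O(T)$ is not right: the integral is only in $\xi$, so the norm is $O(1)$ uniformly in $T$. What actually gives smallness is that, after applying the parabolic resolvent and exploiting the zero terminal datum, $\|w\|_{L^p}\le CT^{\beta}\|w\|_{W^{2,1,p}}$ for some $\beta>0$ (via the H\"older embedding and $w(\cdot,T)=0$, exactly as you use elsewhere). With that correction your absorption argument goes through, but the paper's choice to freeze the nonlocal term avoids this extra bootstrap and is a bit cleaner. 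Either route yields the result.
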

\begin{proof}
    We shall use the contraction mapping principle, similar to the proof of Theorem \ref{exi}, to show that the system \re{adj} admits a unique solution $(w,\lambda)$. %Without loss of generality, we assume that $T<1$. 
    Consider the Banach space
    $$\mathcal{K}_2=\{(w,\lambda)\in L^\infty((0,1)\times(0,T))\times L^\infty(0,T); \, \|w\|_{L^\infty((0,1)\times(0,T))} + \|\lambda\|_{L^\infty(0,T)} \le K_2\}.$$
For any $(l,\psi)\in \mathcal{B}_1$, we define $(w,\lambda)$ by the solution of the following system:
\begin{equation}\label{adj-proof1}
    \left\{
    \begin{aligned}
    &\begin{aligned}
        &- w_t + \frac{\rho'}{\rho} (\xi w)_\xi -\frac{1}{\rho^2}w_{\xi\xi} + (1+m)w= \mu\int_0^1 l \xi u_\xi \dif \xi + \mu\psi \rho
    \end{aligned}
         \hspace{2em} &0<\xi<1,\;0<t<T,\\
        & w = 0 \hspace{2em} &\xi = 1, \; 0<t<T,\\
        & w_\xi = 0 \hspace{2em} &\xi=0,\;0<t<T,\\
        & w = 0 \hspace{2em} &0<\xi<1,\; t=T,\\
        &-\lambda' = 1 +  \psi \int_0^1 \mu(u-\widetilde{\sigma})\dif \xi  + \frac{2}{\rho^3}\int_0^1 w_\xi u_\xi\dif \xi  \hspace{2em} & 0<t<T,\\
        & \lambda = 0 \hspace{2em}& t=T.
    \end{aligned}
    \right.
\end{equation}
We start with analyzing the system for $w$. By Theorem \ref{exi}, there exists $C(T)>0$ such that
\begin{eqnarray*}
    &\|u\|_{W^{2,1,p}((0,1)\times(0,T))} + \|\rho\|_{C^1[0,T]}\le C(T),\\
    &0< u<1,\hspace{2em} 0< \rho_0 e^{-\mu\widetilde{\sigma} T} \le  \rho \le \rho_0 e^{\mu(1-\widetilde{\sigma})T}.
\end{eqnarray*}
Here and hereafter, $C(T)$ represents constants that are bounded for  $T$ in any bounded set. Note that the coefficients in the parabolic equation for $w$ are bounded. %Note that the standard $L^p$ theory for parabolic equations typically applies when an initial condition is imposed. Here, we can introduce a change of variable $t\rightarrow T-t$, which transforms the terminal condition into an initial condition, thereby avoiding any issues in applying the $L^p$ estimates.
So, we can introduce a change of variable $t\rightarrow T-t$ and then apply the parabolic $L^p$ estimate
to obtain
\begin{equation}\label{thm5.2-1}
    \|w\|_{W^{2,1,p}((0,1)\times(0,T))} \le C(T)\Big\|\mu\int_0^1 l \xi u_\xi \dif \xi + \mu\psi \rho\Big\|_{L^p((0,1)\times(0,T))} \le C(T).
\end{equation}
Using the embedding theorem as in the proof of Theorem \ref{exi}, 
\begin{equation}\label{thm5.2-3}
    \|w\|_{C^{1+\alpha,(1+\alpha)/2}([0,1]\times[0,T])} \le C(T)\|w\|_{W^{2,1,p}((0,1)\times(0,T))} \le C(T).
\end{equation}
For any $x\in[0,1]$ and $t\in[0,T]$, noticing  $w(x,T)=0$, it follows that
\begin{equation*}
    \begin{split}
        |w(x,t)|=|w(x,T)-w(x,t)|&\le \sup\limits_{t_1,t_2\in [0,T]}\frac{|w(x,t_1)-w(x,t_2)|}{|t_1-t_2|^{1/2}}|t_1-t_2|^{1/2}\\
        &\le \|w\|_{C^{1+\alpha,(1+\alpha)/2}([0,1]\times[0,T])} T^{1/2}.
    \end{split}
\end{equation*}
Thus, we have
\begin{equation}\label{bound-lambda1}
    \|w\|_{L^\infty((0,1)\times(0,T))} \le T^{1/2} \|w\|_{C^{1+\alpha,(1+\alpha)/2}([0,1]\times[0,T])} \le C(T) T^{1/2}.
\end{equation}

We then analyze the equation for $\lambda$. By Theorem \ref{exi} and the estimate \re{thm5.2-3}, we know that $u,w\in C^{1+\alpha,(1+\alpha)/2}([0,1]\times[0,T])$. Hence, the right-hand side of the equation for $\lambda$ is continuous and bounded. Integrating the equation of $\lambda$ over the interval $[t,T]$ and recalling that $\lambda(T)=0$, we obtain
\begin{equation}\label{bound-lambda2}
    \|\lambda\|_{L^\infty(0,T)} \le C(T) T.
\end{equation}
Thus, if we define a mapping $\mathcal{M}_2$ by $$(w,\lambda)=\mathcal{M}_2(l,\psi),$$
then it is clear from \re{bound-lambda1} and \re{bound-lambda2} that $\mathcal{M}_2$ maps $\mathcal{K}_2$ into itself when $T>0$ is sufficiently small.

To further prove the mapping is a contraction, let $(l_i, \psi_i)\in\mathcal{B}_1$ and denote 
$$
(w_i,\lambda_i)=\mathcal{M}_2(l_i,\psi_i) \quad\text{ for  }   i=1,2.
$$ 
Since the equations in the system \re{adj-proof1} are linear, we have
\begin{equation*}
    \|w_1 - w_2\|_{W^{2,1,p}((0,1)\times(0,T))}  \le C(T) \big(\|l_1 - l_2\|_{L^\infty((0,1)\times(0,T))} + \|\psi_1 - \psi_2 \|_{L^\infty(0,T)} \big),
\end{equation*}
which, by the Sobolev embedding theorem, implies
\begin{equation*}
    \|w_1 - w_2\|_{C^{1+\alpha,(1+\alpha)/2}([0,1]\times[0,T])}  \le C(T) \big(\|l_1 - l_2\|_{L^\infty((0,1)\times(0,T))} + \|\psi_1 - \psi_2 \|_{L^\infty(0,T)} \big).
\end{equation*}
Hence,
\begin{equation}\label{adjp1}
    \|w_1 - w_2\|_{L^\infty ([0,1]\times[0,T])}  \le C(T) T^{1/2} \big(\|l_1 - l_2\|_{L^\infty((0,1)\times(0,T))} + \|\psi_1 - \psi_2 \|_{L^\infty(0,T)} \big).
\end{equation}
Subtracting the equations for $\lambda_1$ and $\lambda_2$ and integrating over $[0,T]$, we obtain
\begin{equation}\label{adjp2}
\begin{split}
    \|\lambda_1 - \lambda_2\|_{L^\infty(0,T)}  &\le C(T) T\big(\|\psi_1 - \psi_2 \|_{L^\infty(0,T)} + \|w_1 - w_2\|_{C^{1,1/2}([0,1]\times[0,T])} \big)\\
    &\le C(T) T\big(\|\psi_1 - \psi_2 \|_{L^\infty(0,T)} + CT^{\alpha/2} \|w_1 - w_2\|_{C^{1+\alpha,(1+\alpha)/2}([0,1]\times[0,T])} \big)\\
    & \le C(T) T \big(\|l_1 - l_2\|_{L^\infty((0,1)\times(0,T))} + \|\psi_1 - \psi_2 \|_{L^\infty(0,T)}\big).
    \end{split}
\end{equation}
By \re{adjp1} and \re{adjp2},  the mapping $\mathcal{M}_2$ is a contraction when $T>0$ is sufficiently small. Finally, we utilize the estimate \re{thm5.2-1} and the equation for $\lambda'$ to deduce that $w\in W^{2,1,p}((0,1)\times(0,T))$ and $\lambda \in C^1[0,T]$. The proof is complete.
\end{proof}

After deriving the sensitive system and the adjoint system, we are now ready to characterize the optimal control. Let $m$ be an optimal control, and $(u,\rho)$ the corresponding solution of the system \re{eqn1} -- \re{eqn6}. Suppose $m+\varepsilon h\in U_M$ for $\varepsilon >0$, and $(u^\varepsilon,\rho^\varepsilon)$ is the corresponding solution. We compute the directional derivative of the objective functional $J(m)$ with respect to $m$ in the direction of $h$. Since $J(m)$ is the minimum value,
\begin{equation*}
\begin{split}
    0&\le \lim\limits_{\varepsilon\rightarrow 0^+} \frac{J(m+\varepsilon h)-J(m)}{\varepsilon}\\
    &=\lim\limits_{\varepsilon\rightarrow 0^+} \int_0^T \frac{\rho^\varepsilon-\rho}{\varepsilon}\dif t + \lim\limits_{\varepsilon\rightarrow 0^+} \int_0^T B(2mh + \varepsilon h^2)\dif t\\
    &=\int_0^T \eta \dif t + \int_0^T 2Bmh \dif t \\
    &= \int_0^T \langle \begin{pmatrix}
        v \\ \eta
    \end{pmatrix}, \begin{pmatrix}
        0 \\ 1
    \end{pmatrix}\rangle \dif t + \int_0^T 2Bmh \dif t\\
    &=\int_0^T \langle \begin{pmatrix}
        v \\ \eta
    \end{pmatrix}, \mathcal{L}^* \begin{pmatrix}
        w \\ \lambda
    \end{pmatrix}\rangle \dif t + \int_0^T 2Bmh \dif t =\int_0^T \langle \mathcal{L}\begin{pmatrix}
         v \\  \eta
    \end{pmatrix}, \begin{pmatrix}
        w \\ \lambda
    \end{pmatrix}\rangle \dif t + \int_0^T 2Bmh \dif t\\
    &=\int_0^T \langle \begin{pmatrix}
         -uh \\  0
    \end{pmatrix}, \begin{pmatrix}
        w \\ \lambda
    \end{pmatrix}\rangle \dif t + \int_0^T 2Bmh \dif t= \int_0^T \int_0^1 -w u h \dif \xi \dif t + \int_0^T 2Bmh \dif t\\
    &=\int_0^T h\Big(2Bm - \int_0^1 w u \dif \xi \Big)\dif t,
\end{split}
\end{equation*}
where we used Theorem \ref{senthm} and equations \re{sys-matrix} and \re{adj:L4}. Therefore, we arrive at
\begin{equation*}
    \int_0^T h\Big(2Bm - \int_0^1 w u \dif \xi \Big)\dif t \ge 0,
\end{equation*}
for any $h\in L^\infty(0,T)$ such that $m+\varepsilon h\in U_M$. By a standard  argument, we obtain the characterization of the optimal control:
\begin{equation}
    \label{optimalc}
    m^*(t) = \min\Big\{\max\Big\{0,\frac{\int_0^1 w(\xi,t) u(\xi,t) \dif \xi}{2B} \Big\}, M\Big\},
\end{equation}
where $u$ is the solution to the state system, and $w$ is the corresponding adjoint variable.

Substituting \re{optimalc} into the state system \re{eqn1} -- \re{eqn6} and the adjoint system \re{adj}, we obtain the optimality system:
{\allowdisplaybreaks\begin{align}\label{os}
    &\\
        \nonumber &u_t - \frac{\rho'}{\rho}\xi u_{\xi} - \frac{1}{\rho^2} u_{\xi\xi} + (1+m^*)u =0\hspace{2em} \nonumber \nonumber &0<\xi<1,\;0<t<T,\\
        \nonumber & u = 1 \hspace{2em} &\xi = 1, \; 0<t<T,\\
        \nonumber & u_\xi = 0 \hspace{2em} &\xi=0,\; 0<t<T,\\
        \nonumber& u = u_0 \hspace{2em} &0<\xi<1,\; t=0,\\
        \nonumber& \rho'=\rho \int_0^1 \mu(u-\widetilde{\sigma}) \dif \xi \hspace{2em} & 0<t<T,\\
        \nonumber&\rho = \rho_0 \hspace{2em} & t=0,\\
        \nonumber&\begin{aligned}
        &-w_t + \frac{\rho'}{\rho}(\xi w)_\xi -\frac{1}{\rho^2}w_{\xi\xi} + (1+m^*)w = \mu\int_0^1 w \xi u_\xi \dif \xi + \mu\lambda \rho
    \end{aligned}
         &0<\xi<1,\;0<t<T,\\
        \nonumber& w = 0 \hspace{2em} &\xi = 1, \; 0<t<T,\\
        \nonumber& w_\xi = 0 \hspace{2em} &\xi=0,\;0<t<T,\\
        \nonumber& w = 0 \hspace{2em} &0<\xi<1,\; t=T,\\
        \nonumber& 
            -\lambda' = 1 +  \lambda \int_0^1 \mu(u-\widetilde{\sigma})\dif \xi  + \frac{2}{\rho^3}\int_0^1 w_\xi u_\xi\dif \xi
          \hspace{1em} & 0<t<T,\\
        \nonumber& \lambda = 0 \hspace{2em}& t=T,\\
        \nonumber&m^*(t) = \min\Big\{\max\Big\{0,\frac{\int_0^1 w(\xi,t) u(\xi,t) \dif \xi}{2B} \Big\}, M\Big\} \hspace{2em} & 0<t<T.
\end{align}}

The existence of a solution to the optimality system is guaranteed by Theorems \ref{exi} and \ref{thm:adj}. For a sufficiently small final time $T$, we now proceed to prove the uniqueness of this solution, thereby providing a characterization, as shown in \re{optimalc}, of the unique optimal control based on the solution of the optimality system.

\begin{thm}\label{thm:os}
    Suppose the assumptions of Theorem \ref{exi} are satisfied. For sufficiently small $T>0$, the optimality system \re{os} has a unique solution $(u,\rho,w,\lambda)$ such that $u,w\in W^{2,1,p}((0,1)\times(0,T))$ and $\rho,\lambda\in C^1[0,T]$.
\end{thm}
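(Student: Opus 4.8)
The plan is to establish uniqueness for sufficiently small $T$ by a contraction-style estimate on the difference of two solutions; existence of a solution to \re{os} is already supplied by Theorem \ref{exi-oc} together with Theorems \ref{exi} and \ref{thm:adj} and the characterization \re{optimalc}. Suppose $(u_1,\rho_1,w_1,\lambda_1)$ and $(u_2,\rho_2,w_2,\lambda_2)$ both solve \re{os} on $[0,T]$, with associated controls $m_1^*,m_2^*$ determined by \re{optimalc}, and set $U=u_1-u_2$, $P=\rho_1-\rho_2$, $W=w_1-w_2$, $\Lambda=\lambda_1-\lambda_2$. By Theorems \ref{exi} and \ref{thm:adj}, $u_i,w_i$ are bounded in $W^{2,1,p}((0,1)\times(0,T))$ and hence in $C^{1+\alpha,(1+\alpha)/2}([0,1]\times[0,T])$, $\rho_i,\lambda_i$ are bounded in $C^1[0,T]$, and $\rho_i$ is bounded below by a positive constant, all uniformly for $T$ in a bounded set. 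Since $m_i^*(t)$ is the projection of $\frac1{2B}\int_0^1 w_iu_i\dif\xi$ onto $[0,M]$ and this projection is $1$-Lipschitz, I would first record the algebraic bound
\begin{equation*}
\|m_1^*-m_2^*\|_{L^\infty(0,T)}\le C\big(\|W\|_{L^\infty((0,1)\times(0,T))}+\|U\|_{L^\infty((0,1)\times(0,T))}\big).
\end{equation*}

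I would then subtract the state equations. The difference $U$ solves a parabolic problem with zero initial data whose principal part uses the coefficients of $\rho_1$ and whose right-hand side is a finite sum of products of a bounded factor of $u_2$ (one of $\xi u_{2,\xi}$, $u_{2,\xi\xi}$, $u_2$) with $\big(\frac{\rho_1'}{\rho_1}-\frac{\rho_2'}{\rho_2}\big)$, $\big(\frac1{\rho_1^2}-\frac1{\rho_2^2}\big)$, or $(m_1^*-m_2^*)$; the first two of these are bounded by $C\|P\|_{C^1[0,T]}$, so the parabolic $L^p$ estimate (after the same extension of the time interval to $[0,1]$ used in Theorem \ref{exi}, to keep constants non-degenerate) gives $\|U\|_{W^{2,1,p}}\le C\big(\|P\|_{C^1[0,T]}+\|m_1^*-m_2^*\|_{L^\infty}\big)$. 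Subtracting the $\rho$-equations, multiplying by $P$, and applying Gronwall's inequality with $P(0)=0$ gives $\|P\|_{C^1[0,T]}\le C\|U\|_{L^\infty}$. Because $U$ and $U_\xi$ vanish at $t=0$, the embedding $W^{2,1,p}\hookrightarrow C^{1+\alpha,(1+\alpha)/2}$ yields $\|U\|_{L^\infty}\le CT^{(1+\alpha)/2}\|U\|_{W^{2,1,p}}$ and $\|U_\xi\|_{L^\infty}\le CT^{\alpha/2}\|U\|_{W^{2,1,p}}$; combining these with the previous two displays and absorbing the small term for $T$ small, I get $\|U\|_{W^{2,1,p}}\le C\|m_1^*-m_2^*\|_{L^\infty}$, then, by the Lipschitz bound, $\|m_1^*-m_2^*\|_{L^\infty}\le C\|W\|_{L^\infty}$, and therefore
\begin{equation*}
\|U\|_{W^{2,1,p}}\le C\|W\|_{L^\infty},\qquad \|U\|_{L^\infty}+\|U_\xi\|_{L^\infty}+\|P\|_{C^1[0,T]}\le CT^{\alpha/2}\|W\|_{L^\infty}.
\end{equation*}

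On the adjoint side I would argue symmetrically. The difference $W$ solves a backward parabolic problem with zero terminal data whose source is a finite sum of bounded factors of $w_2,u_1,\lambda_2,\rho_1$ multiplied by $P$, $P'$, $U_\xi$, $\Lambda$, $m_1^*-m_2^*$, or $W$ (the last through $\mu\int_0^1 W\xi u_{1,\xi}\dif\xi$), so the $L^p$ estimate after reversing time gives $\|W\|_{W^{2,1,p}}\le C\big(\|P\|_{C^1[0,T]}+\|U_\xi\|_{L^\infty}+\|\Lambda\|_{L^\infty}+\|m_1^*-m_2^*\|_{L^\infty}+\|W\|_{L^\infty}\big)$; subtracting the $\lambda$-equations and integrating from $t$ to $T$ with $\Lambda(T)=0$ gives, for $T$ small, $\|\Lambda\|_{L^\infty}\le CT\big(\|U\|_{L^\infty}+\|P\|_{L^\infty}+\|W_\xi\|_{L^\infty}+\|U_\xi\|_{L^\infty}\big)$. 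Since $W$ and $W_\xi$ vanish at $t=T$, $\|W\|_{L^\infty}\le CT^{(1+\alpha)/2}\|W\|_{W^{2,1,p}}$ and $\|W_\xi\|_{L^\infty}\le CT^{\alpha/2}\|W\|_{W^{2,1,p}}$. Feeding in the bounds from the previous paragraph, every term on the right of the $\|W\|_{W^{2,1,p}}$ inequality becomes a positive power of $T$ times $\|W\|_{W^{2,1,p}}$, so it collapses to $\|W\|_{W^{2,1,p}}\le CT^{(1+\alpha)/2}\|W\|_{W^{2,1,p}}$; hence $W\equiv 0$ for $T$ small, and unwinding the chain gives $m_1^*\equiv m_2^*$, $U\equiv 0$, $P\equiv 0$, $\Lambda\equiv 0$, which is the claimed uniqueness.

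The main obstacle I anticipate is the bookkeeping of this forward--backward coupling: the state difference is controlled only after paying a factor $T^{(1+\alpha)/2}$ (the two solutions share data at $t=0$), the adjoint difference only after paying a factor $T^{(1+\alpha)/2}$ (they share data at $t=T$), and the two sides interact both directly and through $m_1^*-m_2^*$, so the estimates must be ordered so that every quantity that reappears on the right-hand side of the final $\|W\|_{W^{2,1,p}}$ inequality comes with a strictly positive power of $T$ — which is exactly what confines the result to small $T$. A minor but necessary technical point, treated just as in Theorems \ref{exi} and \ref{thm:adj}, is to keep the parabolic $L^p$-estimate constants independent of the lower bound on $T$ by first extending the time interval to $[0,1]$.
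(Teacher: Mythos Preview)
Your proposal is correct and follows essentially the same strategy as the paper's proof: subtract two solutions, exploit the $1$-Lipschitz property of the truncation map defining $m^*$, apply parabolic $L^p$ estimates plus the embedding into $C^{1+\alpha,(1+\alpha)/2}$ (with the time-extension trick to keep constants uniform), extract positive powers of $T$ from the shared initial/terminal data, and close for $T$ small. The only difference is organizational: you first close the state-side loop to reach $\|U\|_{W^{2,1,p}}\le C\|W\|_{L^\infty}$ and then the adjoint-side loop, whereas the paper keeps all four difference estimates in parallel and sums them at the end; both orderings work. One small wording slip: at the point where you write ``by the Lipschitz bound, $\|m_1^*-m_2^*\|_{L^\infty}\le C\|W\|_{L^\infty}$'' the Lipschitz bound still carries a $\|U\|_{L^\infty}$ term, so you need one more absorption (using $\|U\|_{L^\infty}\le CT^{(1+\alpha)/2}\|U\|_{W^{2,1,p}}$) before that inequality holds --- but this is exactly the mechanism you already invoked, so the argument is unaffected.
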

\begin{proof}
    Suppose that $(u,\rho,w,\lambda)$ and $(\bar{u},\bar{\rho},\bar{w},\bar{\lambda})$ are two solutions to the optimality system \re{os}. By Theorems \ref{exi} and \ref{thm:adj}, we have 
    $$\|u\|_{W^{2,1,p}((0,1)\times(0,T))} + \|w\|_{W^{2,1,p}((0,1)\times(0,T))} + \|\rho|_{C^1[0,T]} + \|\lambda\|_{C^1[0,T]} \le C(T),$$
    $$\|\bar{u}\|_{W^{2,1,p}((0,1)\times(0,T))} + \|\bar{w}\|_{W^{2,1,p}((0,1)\times(0,T))} + \|\bar{\rho}\|_{C^1[0,T]} + \|\bar{\lambda}\|_{C^1[0,T]} \le C(T),$$
where $C(T)$ and $C_i(T)$, hereafter, are bounded for $T$ in any bounded set.
    %Assume without loss of generality that $T<1$. 
    We denote 
    \begin{equation*}
        m^*=\min\Big\{\max\Big\{0,\frac{\int_0^1 w u \dif \xi}{2B} \Big\}, M\Big\},
    \end{equation*}
    and 
    \begin{equation*}
        \bar{m}^*=\min\Big\{\max\Big\{0,\frac{\int_0^1 \bar{w} \bar{u} \dif \xi}{2B} \Big\}, M\Big\},
    \end{equation*}
    respectively. It follows from the definition of $m^*$ and $\bar m^*$ that
\begin{equation*}
    \|m^*-\bar{m}^*\|_{L^\infty(0,T)} \le C\big(\|w-\bar{w}\|_{L^\infty((0,1)\times(0,T))} + \|u-\bar{u}\|_{L^\infty((0,1)\times(0,T))}\big).
\end{equation*}

Subtracting the equations for $u$ and $\bar{u}$, and employing similar techniques as in the proof of Theorem \ref{exi}, we obtain
\begin{equation*}
\begin{split}
     &\|u-\bar{u}\|_{C^{1+\alpha,(1+\alpha)/2}([0,1]\times[0,T])}\\
    &\hspace{2em}\le C(T)\big(\|\rho-\bar{\rho}\|_{C^1[0,T]} + \|m^*-\bar{m}^*\|_{L^\infty(0,T)}\big)\\
    &\hspace{2em}\le C(T)\big(\|\rho-\bar{\rho}\|_{C^1[0,T]} + \|w-\bar{w}\|_{L^\infty((0,1)\times(0,T))} + \|u-\bar{u}\|_{L^\infty((0,1)\times(0,T))}\big)\\
    &\hspace{2em}\le C(T)\big(\|\rho-\bar{\rho}\|_{C^1[0,T]} + \|w-\bar{w}\|_{C^{1,1/2}([0,1]\times[0,T])} + \|u-\bar{u}\|_{C^{1,1/2}([0,1]\times[0,T])}\big),
    \end{split}
\end{equation*}
Taking the initial conditions of $u$ and $\bar{u}$ into consideration yields
\begin{equation}\label{ub}
\begin{split}
     &\|u-\bar{u}\|_{C^{1,1/2}([0,1]\times[0,T])} \\
     &\hspace{2em}\le C(T) T^{\alpha/2} \|u-\bar{u}\|_{C^{1+\alpha,(1+\alpha)/2}([0,1]\times[0,T])}\\
     &\hspace{2em}\le C_9(T) T^{\alpha/2} \big(\|\rho-\bar{\rho}\|_{C^1[0,T]} + \|w-\bar{w}\|_{C^{1,1/2}([0,1]\times[0,T])} + \|u-\bar{u}\|_{C^{1,1/2}([0,1]\times[0,T])}\big).
     \end{split}
\end{equation}
Then, for the equations of $\rho$ and $\bar{\rho}$, we apply techniques similar to those used in the proofs of Theorems \ref{exi} and \ref{senthm} to obtain
\begin{equation*}
    \|\rho -\bar{\rho}\|_{L^\infty(0,T)} \le C(T)T^{1/2} \|u-\bar{u}\|_{L^\infty((0,1)\times(0,T))},
\end{equation*}
and
\begin{equation*}
    \|(\rho-\bar{\rho})'\|_{L^\infty(0,T)} \le C\|\rho-\bar{\rho}\|_{L^\infty(0,T)} + C \|u-\bar{u}\|_{L^\infty((0,1)\times(0,T))} \le C(T)\|u-\bar{u}\|_{L^\infty((0,1)\times(0,T))}.
\end{equation*}
Hence,
\begin{equation}
    \label{rb}
    \|\rho-\bar{\rho}\|_{C^1[0,T]}\le C(T)\|u-\bar{u}\|_{L^\infty((0,1)\times(0,T))} \le C_{10}(T) T^{1/2}\|u-\bar{u}\|_{C^{1,1/2}([0,1]\times[0,T])}.
\end{equation}

Next, we derive estimates for $w-\bar{w}$ and $\lambda-\bar{\lambda}$. By subtracting the governing equations of $w$ and $\bar{w}$ and applying the $L^p$ estimates for the resulting parabolic equation, we obtain
\begin{equation*}
\begin{split}
    &\|w-\bar{w}\|_{W^{2,1,p}((0,1)\times(0,T))} \\
    &\hspace{2em}\le C(T)\big(\|\rho-\bar{\rho}\|_{C^1[0,T]} + \|w-\bar{w}\|_{L^\infty(0,T)} + \|m^*-\bar{m}^*\|_{L^\infty(0,T)} +  \|u-\bar{u}\|_{C^{1,1/2}([0,1]\times[0,T])} \\
    &\hspace{4.5em}+ \|\lambda -\bar{\lambda}\|_{L^\infty(0,T)}\big)\\
    &\hspace{2em}\le C(T)\big(\|\rho-\bar{\rho}\|_{C^1[0,T]} + \|w-\bar{w}\|_{C^{1,1/2}([0,1]\times[0,T])} + \|\lambda -\bar{\lambda}\|_{L^\infty(0,T)} + \|u-\bar{u}\|_{C^{1,1/2}([0,1]\times[0,T])}\big).
    \end{split}
\end{equation*}
Using the embedding theorem, after artificially extending the time interval to $[0,1]$ as applied in the proof of Theorem \ref{exi} to avoid dependency of the embedding constant on $T$, we further have
\begin{equation*}
    \|w-\bar{w}\|_{C^{1,1/2}([0,1]\times[0,T])} \le C T^{\alpha/2} \|w-\bar{w}\|_{C^{1+\alpha,(1+\alpha)/2}([0,1]\times[0,T])} \le C T^{\alpha/2}\|w-\bar{w}\|_{W^{2,1,p}((0,1)\times(0,T))}.
\end{equation*}
This implies
\begin{equation}\label{wb}
\begin{split}
    &\|w-\bar{w}\|_{C^{1,1/2}([0,1]\times[0,T])} \le C_{11}(T) T^{\alpha/2} \big(\|\rho-\bar{\rho}\|_{C^1[0,T]} + \|w-\bar{w}\|_{C^{1,1/2}([0,1]\times[0,T])} \\
    &\hspace{16.7em}+ \|\lambda -\bar{\lambda}\|_{L^\infty(0,T)} + \|u-\bar{u}\|_{C^{1,1/2}([0,1]\times[0,T])}\big).
    \end{split}
\end{equation}
Then, subtracting the equations for $\lambda$ and $\bar{\lambda}$ and  integrating the resulting equation, we obtain
\begin{equation}\label{lb}
\begin{split}
    \|\lambda -\bar{\lambda}\|_{L^\infty(0,T)} &\le C_{12}(T) T \big( \|\lambda -\bar{\lambda}\|_{L^\infty(0,T)} + \|u-\bar{u}\|_{C^{1,1/2}([0,1]\times[0,T])}  \\
    &\hspace{5.3em}+ \|w-\bar{w}\|_{C^{1,1/2}([0,1]\times[0,T])}+ \|\rho-\bar{\rho}\|_{C^1[0,T]}\big).
    \end{split}
\end{equation}

%Note that the constants $C_i(T)$, $i=9,10,11,12$, in estimates \re{ub} -- \re{lb}, depend on $T$, the coefficients, and the bounds of $u$, $\rho$, $w$, $\lambda$, $\bar{u}$, $\bar{\rho}$, $\bar{w}$, and $\bar{\lambda}$; they are bounded when $T$ is bounded. 

Adding \re{ub}, \re{rb}, \re{wb}, and \re{lb}, we arrive at the inequality
\begin{equation*}
\begin{split}
    &\Big(1-C_9(T) T^{\alpha/2} - C_{10}(T)T^{1/2} - C_{11}(T)T^{\alpha/2} - C_{12}(T) T\Big)\|u-\bar{u}\|_{C^{1,1/2}([0,1]\times[0,T])} \\
    &\hspace{2em}+ \Big(1-C_9(T) T^{\alpha/2} - C_{11}(T)(T)T^{\alpha/2}-C_{12}(T)T\Big)\|\rho-\bar{\rho}\|_{C^1[0,T]} \\
    &\hspace{2em}+ \Big(1-C_9(T) T^{\alpha/2} - C_{11}(T) T^{\alpha/2}-C_{12}(T) T\Big) \|w-\bar{w}\|_{C^{1,1/2}([0,1]\times[0,T])}\\
    &\hspace{2em}+ \Big(1-C_{11}(T)T^{\alpha/2} - C_{12}(
    T)T\Big)\|\lambda -\bar{\lambda}\|_{L^\infty(0,T)} \le 0.
    \end{split}
\end{equation*}
If we choose $T>0$ sufficiently small such that
$$1-C_9(T) T^{\alpha/2} - C_{10}(T)T^{1/2} - C_{11}(T)T^{\alpha/2} - C_{12}(T) T>0,$$
then $u=\bar{u}$, $\rho=\bar{\rho}$, $w=\bar{w}$, $\lambda=\bar{\lambda}$.
\end{proof}

\section{Numerical simulations}

\subsection{The steady-state case}
In Section 2, we derived an algebraic characterization of the optimal control strategy in the steady-state case, as given in \re{ss-oc} and \re{ss-ocs}. For the numerical simulations, the parameter settings are as follows: critical concentration of nutrients $\widetilde{\sigma}=0.25$, tumor aggressiveness parameter $\mu=0.5$, the balancing parameter in the objective functional $B=2$, and the maximum amount of inhibitor $M=1$. Applying Newton's method to the equation \re{ss-oc}, we compute the optimal amount of inhibitor is $m_*=0.3269$. Hence, by \re{ss-ocs}, the optimal control variable is calculated as
$$m^* = \min\{m_*,M\}=\min\{0.3269,1\}=0.3269.$$

We will next develop the optimality system for the steady-state case using similar ideas in deriving the optimality system \re{os} for the parabolic case. This will further validate the result in Section 2 and bridge the gap between the steady-state and parabolic cases. Furthermore, it is intriguing to see that the  optimality system of the steady-state case does not follow from \eqref{os} by removing the time derivatives and initial/terminal conditions.

In the steady-state case, we get rid of the time derivatives in both the state system \re{eqn1} -- \re{eqn4} and the sensitivity system \re{sys2}.
%\begin{equation*}
%    \text{(steady-state state system)}\left\{
%    \begin{aligned}
%    &\displaystyle-\frac{1}{\rho^2}u_{\xi\xi} = -u-mu,\\
%    &\displaystyle u(1)=1, \hspace{2em} u_\xi(0) = 0,\\
%    &\displaystyle \int_0^1 \mu (u-\widetilde{\sigma})\dif \xi = 0;
%    \end{aligned}
%    \right.
%\end{equation*}
%\begin{equation*}
%    \text{(steady-state sensitivity system)}\left\{
%    \begin{aligned}
%    &\displaystyle-\frac{1}{\rho^2}v_{\xi\xi} + (1+m)v = -\frac{2\eta}{\rho^3}u_{\xi\xi}-uh,\\
%    &\displaystyle v(1)=1, \hspace{2em} v_\xi(0) = 0,\\
%    &\displaystyle \int_0^1 \mu v \dif \xi = 0.
%    \end{aligned}
%    \right.
%\end{equation*}
%In deriving the adjoint system, it is important to note that the terminal conditions arise from integration by parts, which transfers the time derivatives from the sensitivity variables to the adjoint variables. Therefore, in the steady-state case, where the time derivatives vanish, the terminal conditions in the adjoint system should be disregarded. Furthermore, in 
When deriving the adjoint operators $L^*_1$, $L^*_2$, and $\mathcal{B}^*$, we disregard the time derivative terms in \re{operatorL} and \re{operatorB}, and note that, in steady state
$$\int_0^1 \mu(u-\widetilde{\sigma})\dif \xi = \frac{\rho'}{\rho}=0.$$
Thus, we have
\begin{equation*}
\begin{split}
\langle \begin{pmatrix}
        L_1 v \\ L_2 \eta
    \end{pmatrix}, \begin{pmatrix}
        w \\ \lambda 
    \end{pmatrix}\rangle &= \int_0^1 w\Big( - \frac{1}{\rho^2} v_{\xi\xi}  +(1+m)v\Big)\dif \xi  + \lambda\Big(- \eta \int_0^1 \mu (u-\widetilde{\sigma})\dif \xi\Big)\\
    &=\int_0^1 v\Big(-\frac{1}{\rho^2}w_{\xi\xi} + (1+m)w\Big)\dif \xi + \eta\cdot 0 \\
    &= \langle \begin{pmatrix}
        v \\ \eta
    \end{pmatrix}, \begin{pmatrix}
        L_1^* w \\ L_2^* \lambda 
    \end{pmatrix}\rangle,
    \end{split}
\end{equation*}
\begin{equation*}
\begin{split}
    \langle \mathcal{B}\begin{pmatrix}
        v \\ \eta
    \end{pmatrix},\begin{pmatrix}
        w \\ \lambda
    \end{pmatrix}\rangle &=  \int_0^1 w\Big( \frac{2\eta}{\rho^3}u_{\xi\xi}\Big) \dif \xi  +  \lambda \Big(-\rho \int_0^1 \mu v \dif \xi \Big) \\
    &=  \eta\Big( -\int_0^1 \frac{2}{\rho^3}w_\xi u_\xi \dif \xi \Big) +  \int_0^1 v\Big(-\mu \lambda \rho\Big)\dif \xi = \langle \begin{pmatrix}
        v \\ \eta
    \end{pmatrix},\mathcal{B}^*\begin{pmatrix}
        w \\ \lambda
    \end{pmatrix}\rangle
\end{split}
\end{equation*}
These calculations indicate that
\begin{equation*}
    \begin{pmatrix}
        L_1^* w \\ L_2^* \lambda 
    \end{pmatrix} = \begin{pmatrix}
        -\frac{1}{\rho^2}w_{\xi\xi} + (1+m)w \\ 0
    \end{pmatrix} \quad \text{ and } \quad \mathcal{B}^*\begin{pmatrix}
        w \\ \lambda
    \end{pmatrix} = \begin{pmatrix}
        -\mu\lambda\rho \\ -\int_0^1 \frac{2}{\rho^3}w_\xi u_\xi \dif \xi
    \end{pmatrix}
\end{equation*}
As a result, the optimality system for the steady-state elliptic PDEs is to find the unknown functions $u(\xi)$ and $w(\xi)$, and unknown constants $\rho$, $\lambda$, and $m^*$ such that
{\allowdisplaybreaks\begin{align*}
        &- \frac{u_{\xi\xi}}{\rho^2}  + (1+m^*)u  =0\hspace{2em} &0<\xi<1,\\
        & u = 1 \hspace{2em} &\xi = 1, \\
        & u_\xi = 0 \hspace{2em} &\xi=0,\\
        & 0=\int_0^1 \mu(u-\widetilde{\sigma}) \dif \xi \\
        &-\frac{w_{\xi\xi}}{\rho^2} + (1+m^*)w = \mu\lambda \rho
         \hspace{1em} &0<\xi<1,\\
        & w = 0 \hspace{2em} &\xi = 1, \\
        & w_\xi = 0 \hspace{2em} &\xi=0,\\
        & 0=   1+\frac{2}{\rho^3}\int_0^1 w_\xi u_\xi\dif \xi,\\
        & m^* = \min\Big\{\max\Big\{0,\frac{\int_0^1 w(\xi) u(\xi) \dif \xi}{2B} \Big\}, M\Big\}
    \end{align*}}
    To solve the above system, it is worth noting that, for a given constant $m$, the unknown function $u(\xi)$ can be explicitly solved as
\begin{equation*}
\label{special1}
    u(\xi) = \frac{\cosh(\sqrt{1+m}\rho \xi)}{\cosh(\sqrt{1+m}\rho)},
\end{equation*}
where $\rho$ is uniquely determined by
\begin{equation*}
    \frac{\tanh(\sqrt{1+m}\rho)}{\sqrt{1+m}\rho} = \widetilde{\sigma}.
\end{equation*}
Similarly, since the right-hand-side of the equation for $w$, given by $\mu\lambda\rho$, is a constant, $w$ can be explicitly solved as
\begin{equation*}
    \label{special2}
    w(\xi) = \frac{\mu \lambda \rho}{1+m}\Big(1- \frac{\cosh(\sqrt{1+m}\rho \xi)}{\cosh(\sqrt{1+m}\rho )}\Big),
\end{equation*}
where $\lambda$ is determined by the integral equation $\int_0^1 w_\xi u_\xi \dif \xi = -\frac{\rho^3}{2}$, leading to
$$\frac{1}{\mu\lambda} = \frac{\tanh(\sqrt{1+m}\rho)}{\sqrt{1+m}\rho}-\frac{1}{\cosh^2(\sqrt{1+m}\rho)}.$$
Therefore, for the steady-state case, we use the following algorithm to solve the optimality system and determine the optimal control $m^*$:

\noindent\rule{\textwidth}{1pt}
\begin{algorithm}[H]
\caption{The Steady-state Case}
\label{alg-ss}
\begin{algorithmic}
\STATE {Given $\widetilde{\sigma}$, $\mu$, $B$, $M$, and $TOL>0$, choose an initial constant $m_0 \in [0,M]$.}
\FOR{$i=0,1,2,\cdots$}
\STATE Compute $\rho_i$ from the equation $\frac{\tanh(\sqrt{1+m_i}\rho_i)}{\sqrt{1+m_i}\rho_i} = \widetilde{\sigma}$ using Newton's method;
\STATE Discretize $\xi$ within $[0,1]$ with a constant step size, and let $u_i(\xi_j) = \frac{\cosh(\sqrt{1+m_i}\rho_i \xi_j)}{\cosh(\sqrt{1+m_i}\rho_i)}$
\STATE Let $\lambda_i = \big(\frac{\tanh(\sqrt{1+m_i}\rho_i)}{\sqrt{1+m_i}\rho_i}-\frac{1}{\cosh^2(\sqrt{1+m_i}\rho_i)}\big)^{-1}\big/\mu$ and $w_i(\xi_j) = \frac{\mu \lambda_i \rho_i}{1+m_i}\Big(1-\frac{\cosh(\sqrt{1+m_i}\rho_i \xi_j)}{\cosh(\sqrt{1+m_i}\rho_i )}\Big)$;
\STATE Update the control variable $m_{i+1} = \min\Big\{\max\Big\{0,\frac{\int_0^1 w_i u_i \dif \xi}{2B} \Big\},M\Big\}$;
\STATE If $|m_{i+1}-m_i| < TOL$, stop the algorithm;
\ENDFOR
\end{algorithmic}
\end{algorithm}
\noindent\rule{\textwidth}{1pt}

In Figure \ref{fig:ss}, we implement Algorithm \ref{alg-ss} using a tolerance of $TOL=10^{-5}$ and two different initial guesses for $m_0$. In both scenarios, the optimized control $m$ quickly converges to a constant value, closely approximating $m^* =0.3269$. This outcome aligns with the result derived using direct computations in Section 2.

\begin{figure}[ht]
\centering
\begin{minipage}{0.48\textwidth}
\centering
\includegraphics[height=1.5in]{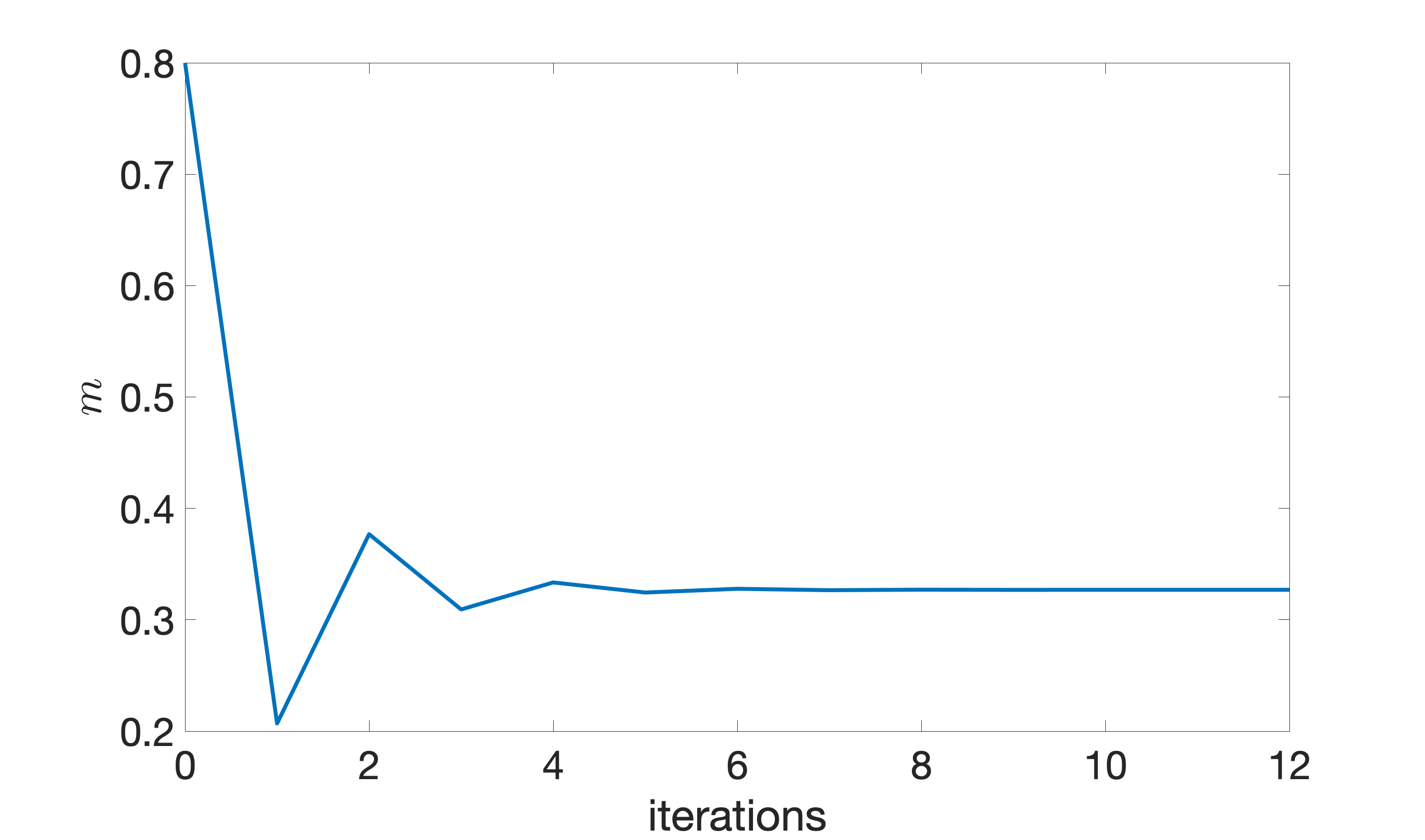}
\end{minipage}
\begin{minipage}{0.48\textwidth}
\centering
\includegraphics[height=1.5in]{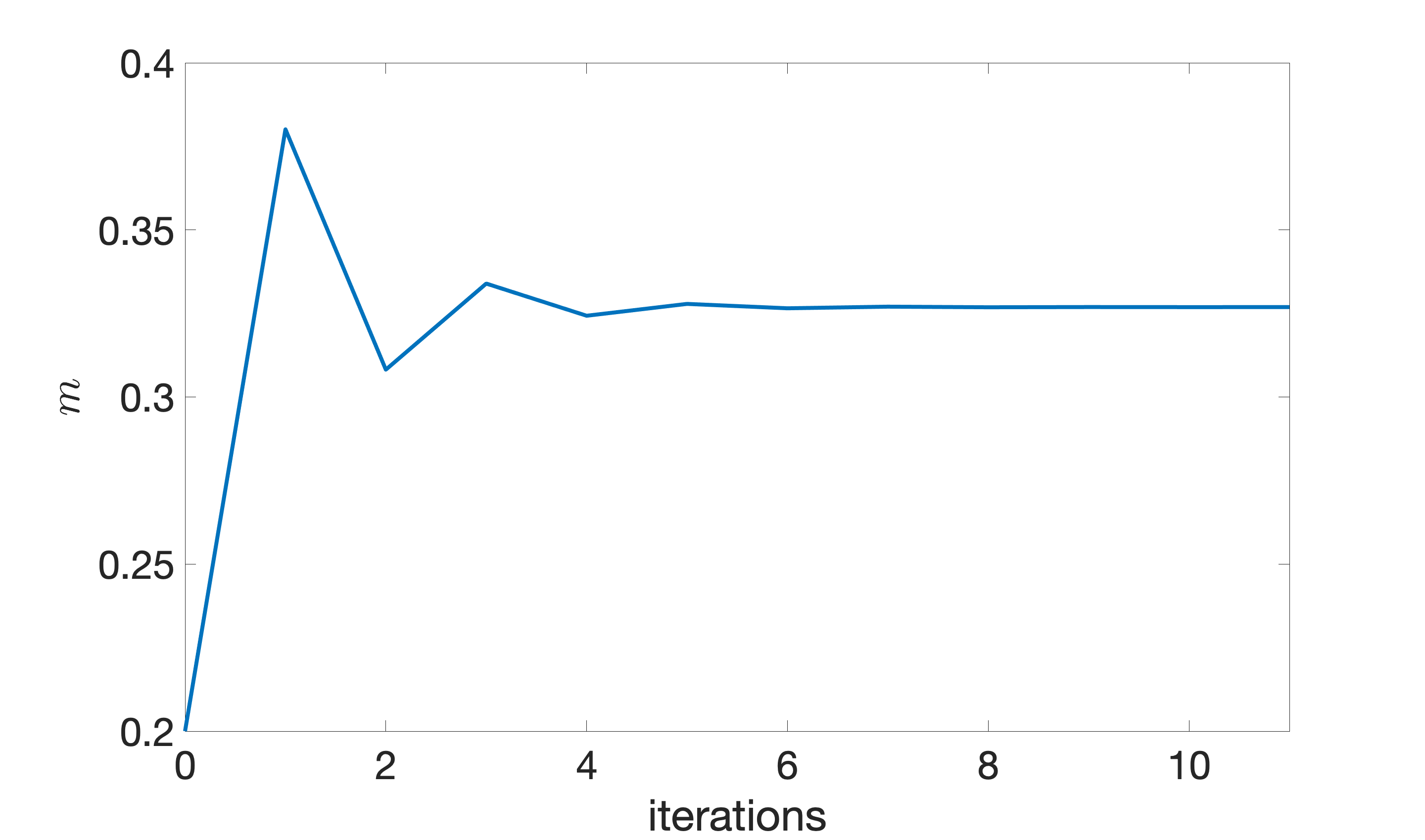}
\end{minipage}
\caption{The convergence of the optimal control in the steady-state case for $\widetilde{\sigma}=0.25$,$\mu=0.5$,$B=2$,$M=1$,and $TOL=10^{-5}$. The left figure shows the convergence with an initial $m_0=0.8$, and the right figure with an initial $m_0=0.2$.}
\label{fig:ss}
\end{figure}

\subsection{{Parabolic} case}
%To find the optimal control variable, as defined in \re{optimalc}, 
We employ the Forward-Backward Sweep Method \cite{hackbusch1978numerical,lenhart2007optimal} to solve the optimality system \re{os} and determine the optimal control.

\noindent\rule{\textwidth}{1pt}
\begin{algorithm}[H]
\caption{Forward-Backward Sweep Method}
\label{alg}
\begin{algorithmic}
\STATE {Given $\widetilde{\sigma}$, $\mu$, $B$, $M$, initial conditions $\rho_0$ and $u_0$, and $TOL>0$. Choose an initial guess $m_0\in U_M$.}
\FOR{$i=0,1,2,\cdots$}
\STATE Compute $(u_i,\rho_i)=(u^{m_i},\rho^{m_i})$ using a forward-in-time finite difference scheme;
\STATE Compute $(w_i,\lambda_i)=(w^{m_i},\lambda^{m_i})$ using a backward-in-time finite difference scheme;
\STATE Update the control variable $m_{i+1} = \min\Big\{\max\Big\{0,\frac{\int_0^1 w_i u_i \dif \xi}{2B}\Big\},M\Big\}$;
\STATE If $\|m_{i+1}-m_i\|_{L^\infty(0,T)} < TOL$, stop the algorithm;
\ENDFOR
\end{algorithmic}
\end{algorithm}
\noindent\rule{\textwidth}{1pt}

The results of applying Algorithm \ref{alg} to solve the complete optimality system \re{os} are shown in Figures \ref{fig:result1} -- \ref{fig:result4}. Although Theorem \ref{thm:os} necessitates a small final time $T$ to ensure the uniqueness of the solution, the simulations, in general, allow for a longer time interval. We solve the optimality system over the time interval $[0,5].$ With the same set of parameter values, a constant initial guess $m_0(t)=0.35$ is used in Figure \ref{fig:result1}, while an oscillatory initial guess $m_0(t)=0.35+0.1\cos(4\pi t)$ is employed in Figure \ref{fig:result2}. Despite starting with different initial values, both results converge to the same optimized control $m^*(t)$ and achieve the same objective value $J=11.5277$. The red dashed curve in the second row of Figures \ref{fig:result1} and \ref{fig:result2} shows $\rho(t)$ without control. We see that, with optimized control, the thickness of the tumor tissue decreases by about 20\% (from approximately 3.1 to around 2.5) by the final time $T=5$. It is also observed that the optimized treatment is at its upper bound at the beginning.

\begin{figure}
    \centering
    \includegraphics[width=0.88\textwidth]{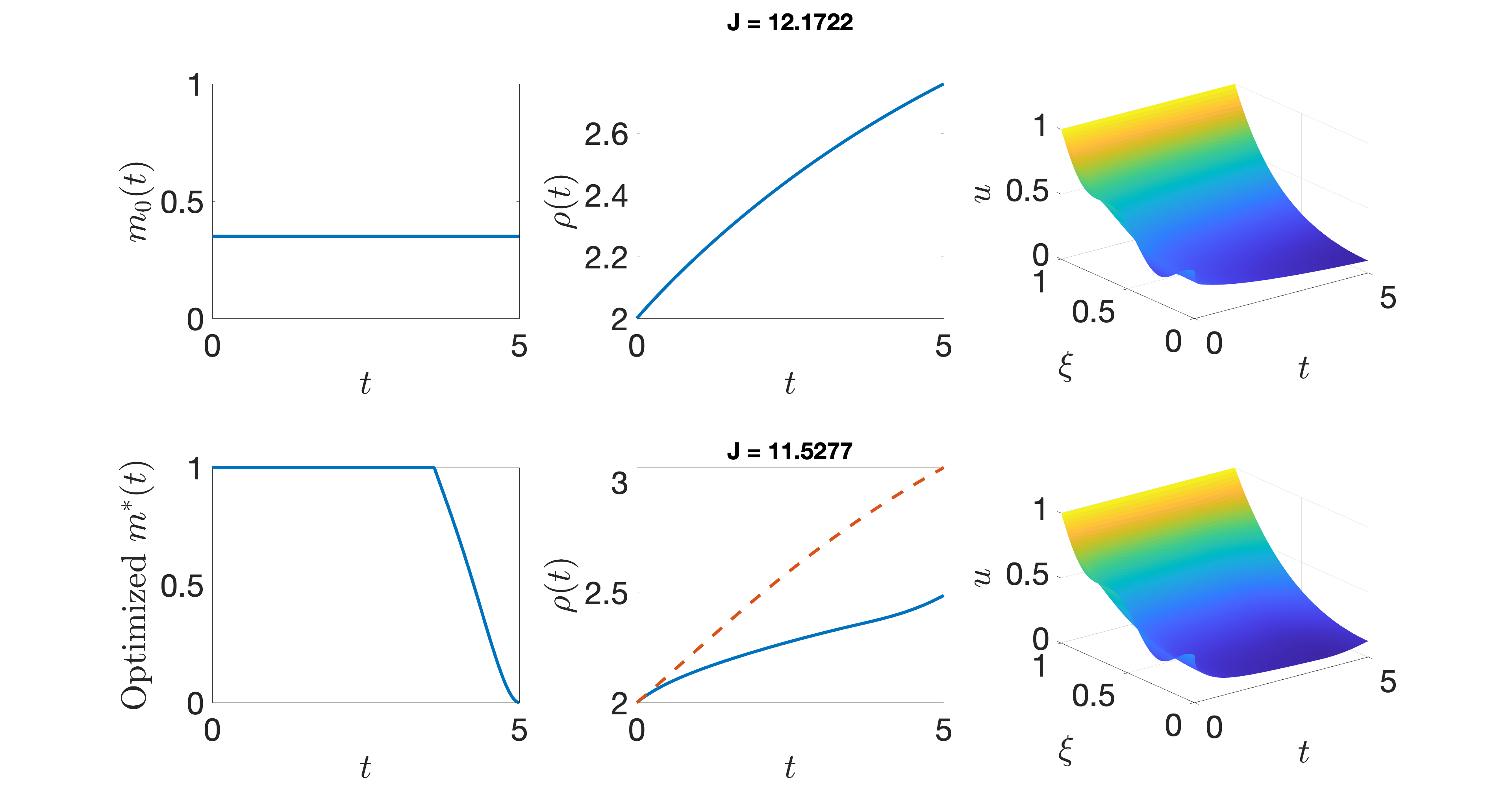}
    \caption{The chosen parameters are $\rho_0=2$, $u_0(\xi)=\frac{\cosh(\rho_0 \xi)}{\cosh(\rho_0)} + 0.1\cos(\frac72 \pi \xi)$, $\widetilde{\sigma}=0.25$, $B=0.05$, $M=1$, $\mu=0.5$, and $TOL=10^{-3}$. The first row shows the initial $m_0(t)=0.35$ and its corresponding $\rho(t)$, $u(\xi,t)$, and $J=12.1722$. The second row shows the optimized $m^*(t)$ and its corresponding $\rho(t)$, $u(\xi,t)$, and $J=11.5277$. The red dashed curve in the second row represents $\rho(t)$ without any control.}
    \label{fig:result1}
\end{figure}

\begin{figure}
    \centering
    \includegraphics[width=0.88\textwidth]{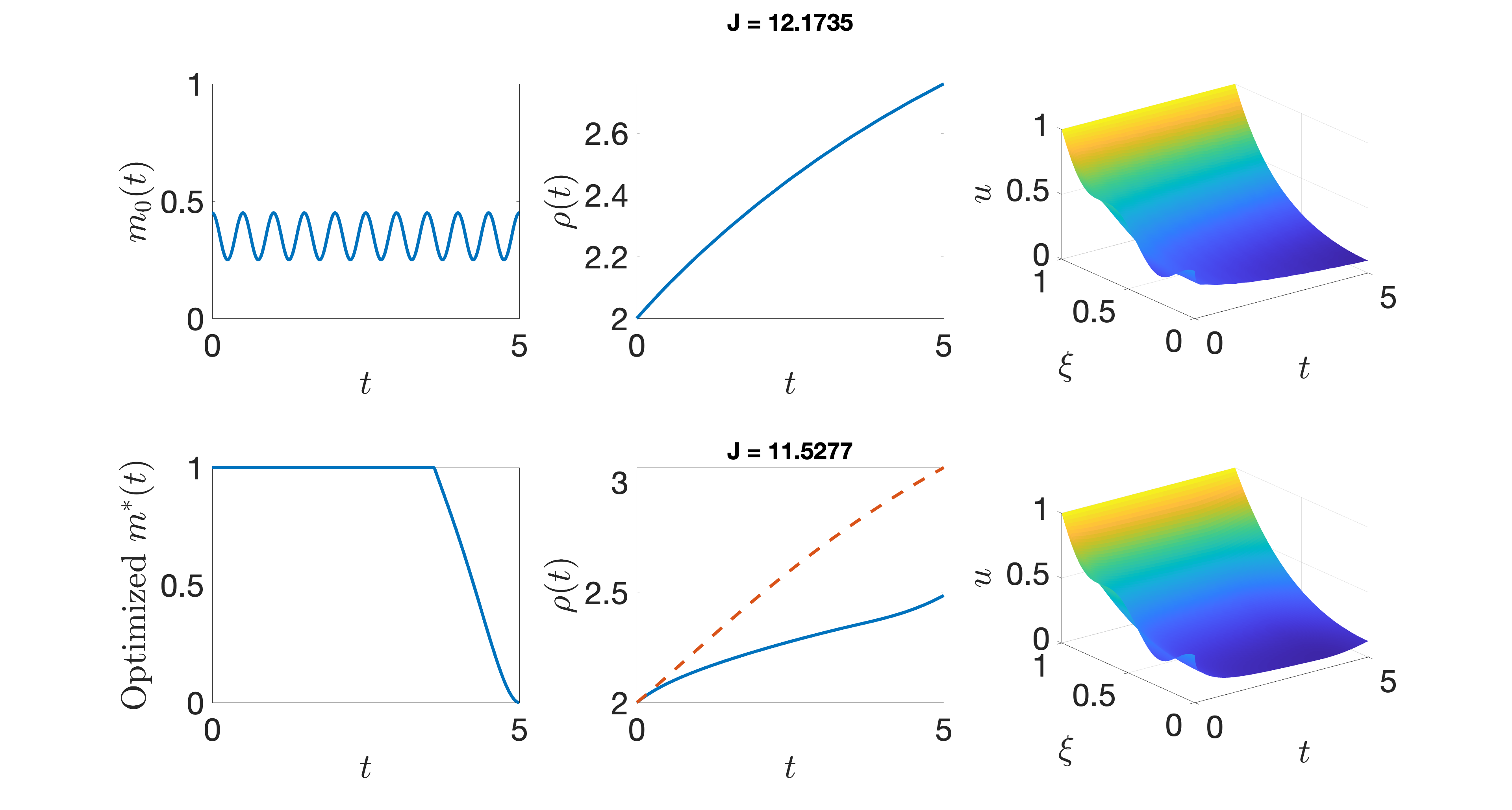}
    \caption{The chosen parameters are $\rho_0=2$, $u_0(\xi)=\frac{\cosh(\rho_0 \xi)}{\cosh(\rho_0)} + 0.1\cos(\frac72 \pi \xi)$, $\widetilde{\sigma}=0.25$, $B=0.05$, $M=1$, $\mu=0.5$, and $TOL=10^{-3}$. The first row shows the initial $m_0(t)=0.35+0.1\cos(4\pi t)$ and its corresponding $\rho(t)$, $u(\xi,t)$, and $J=12.1735$. The second row shows the optimized $m^*(t)$ and its corresponding $\rho(t)$, $u(\xi,t)$, and $J=11.5277$. The red dashed curve in the second row represents $\rho(t)$ without any control.}
    \label{fig:result2}
\end{figure}

In Figure \ref{fig:result3}, we set $\widetilde{\sigma}=0.75$ while keeping all other parameters unchanged. This higher $\widetilde{\sigma}$ value leads to a reduction in tumor thickness as time evolves. 
As illustrated in the middle figure, the solid curve, which represents tumor thickness with optimized control, shows a steeper decline compared to the dashed curve. It shows that, with the optimized control applied, the reduction in tumor thickness is markedly enhanced. When comparing the optimized control strategies in Figures \ref{fig:result1} and \ref{fig:result3}, we note that the amount of inhibitor, with the higher $\widetilde{\sigma}$ value, is administered at the maximum level for a shorter period of time before decreasing in a continuous manner. This observation is consistent with the steady-state case, as discussed in Section 2, which showed that an increase in $\widetilde{\sigma}$ results in a decrease in the optimal control.

Using the same parameter values as in Figure \ref{fig:result1} but increasing $B$ from 0.05 to 0.5, the results are shown in Figure \ref{fig:result4}. It is noted that the level of optimized control decreases as $B$ increases. Mathematically, this is due to the fact that the control intensity is inversely related to the balancing parameter $B$. If the side effects of the tumor growth inhibitor are more severe, then the patient receives a smaller amount of treatment. These results, again, align well with the findings discussed in the steady-state case in Section 2.

\begin{figure}
    \centering
    \includegraphics[width=0.88\textwidth]{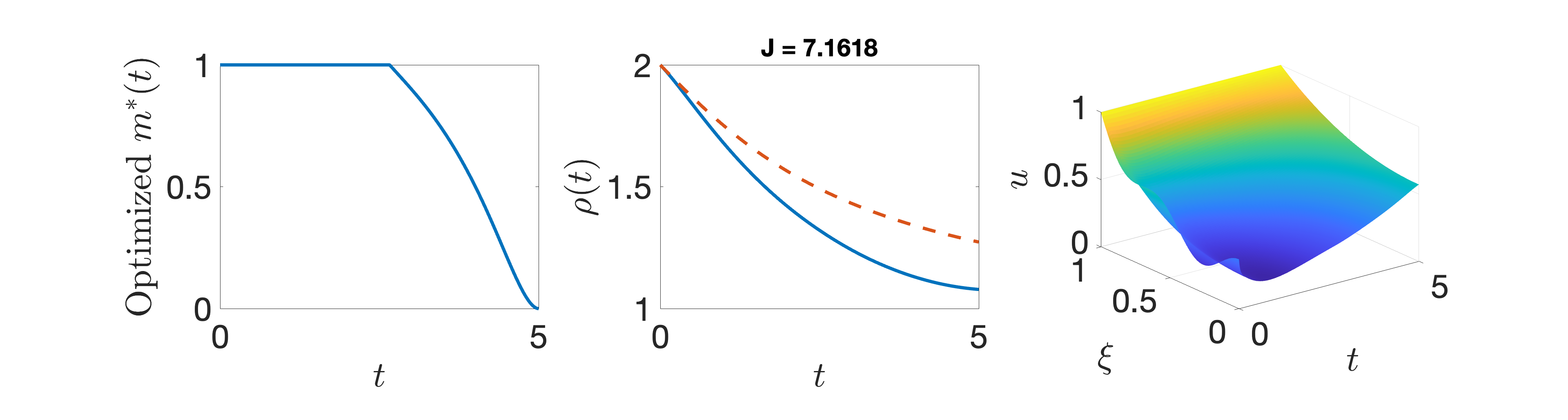}
    \caption{The optimized $m^*(t)$ and its corresponding $\rho(t)$ and $u(\xi,t)$. 
    The chosen parameters are $\rho_0=2$, $u_0(\xi)=\frac{\cosh(\rho_0 \xi)}{\cosh(\rho_0)} + 0.1\cos(\frac72 \pi \xi)$, $\widetilde{\sigma}=0.75$, $B=0.05$, $M=1$, $\mu=0.5$, and $TOL=10^{-3}$. The red dashed curve in the second figure represents $\rho(t)$ without any control.}
    \label{fig:result3}
\end{figure}

\begin{figure}
    \centering
    \includegraphics[width=0.88\textwidth]{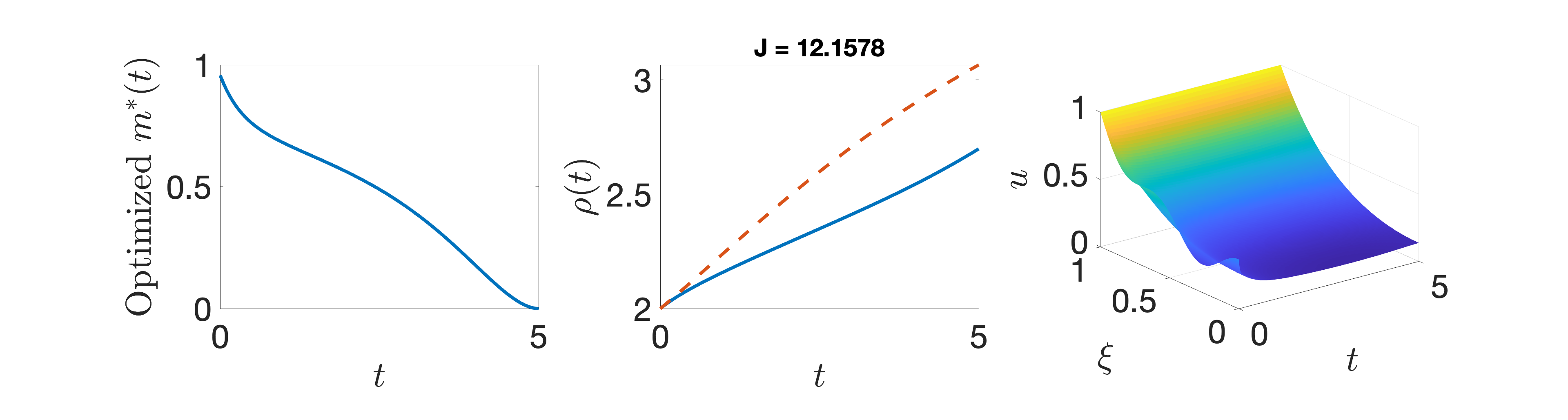}
    \caption{
    The optimized $m^*(t)$ and its corresponding $\rho(t)$ and $u(\xi,t)$. The chosen parameters are $\rho_0=2$, $u_0(\xi)=\frac{\cosh(\rho_0 \xi)}{\cosh(\rho_0)} + 0.1\cos(\frac72 \pi \xi)$, $\widetilde{\sigma}=0.25$, $B=0.5$, $M=1$, $\mu=0.5$, and $TOL=10^{-3}$. The red dashed curve in the second figure represents $\rho(t)$ without any control.}
    \label{fig:result4}
\end{figure}

\section{Discussion}
Free boundary models are commonly used to describe tumor tissue growth. To find the optimal amount of tumor growth inhibitor in a multilayered tumor growth model, we have developed a theoretical framework to tackle an optimization problem based on a free boundary PDE model. 
It has been proved that the optimization problem has a unique solution and can be characterized by the solution of the optimality system. {In the steady-state case, we found an algebraic characterization of the optimizer via direct computations in Section 2. We  validated the result by finding and solving the optimality system of the steady-state elliptic PDEs. For the parabolic case, we employed the forward-backward sweep method, a strategy commonly used in optimal control problems involving parabolic PDE systems, to solve the complete optimality system and determine the optimal control.}

There are a variety of potential extensions and directions for future research. One important direction is to incorporate the control variable into the boundary condition \re{e2} and investigate the corresponding optimal control problem. The boundary condition \re{e2} represents a constant nutrient supply to the tumor via blood vessels. In the context of tumor treatment, a widely recognized therapeutic method is cancer starvation therapy, which aims to block nutrient flow and suppress tumor growth \cite{selwan2016attacking,yu2019advances}. Therefore, modifying the boundary condition to include control could offer a more realistic strategy. Another potential direction is to explore the optimal control problem of the model \re{e1} -- \re{e9}. The challenge here lies in the inclusion of the $p$ variable and its boundary condition \re{e5}. In fact, due to this boundary condition, we only have local well-posedness, not global well-posedness, for the system \cite{cui2009well}. These optimization problems remain open and will be the focus of our future work.

\bigskip
\bibliography{ref}

\begin{thebibliography}{10}

\bibitem{abdulla2021optimal}
U.~G. Abdulla and E.~Cosgrove.
\newblock Optimal control of multiphase free boundary problems for nonlinear
  parabolic equations.
\newblock {\em Applied Mathematics \& Optimization}, 84(1):589--619, 2021.

\bibitem{abdulla2019optimal}
U.~G. Abdulla and B.~Poggi.
\newblock Optimal control of the multiphase {Stefan} problem.
\newblock {\em Applied Mathematics \& Optimization}, 80(2):479--513, 2019.

\bibitem{abdulla2020optimal}
U.~G. Abdulla and B.~Poggi.
\newblock Optimal {Stefan} problem.
\newblock {\em Calculus of Variations and Partial Differential Equations},
  59(2):61, 2020.

\bibitem{barbu1984optimal}
V.~Barbu.
\newblock Optimal control of variational inequalities.
\newblock {\em Research Notes in Math.}, 100, 1984.

\bibitem{bazaliy2003global}
B.~Bazaliy and A.~Friedman.
\newblock Global existence and asymptotic stability for an elliptic-parabolic
  free boundary problem: an application to a model of tumor growth.
\newblock {\em Indiana University Mathematics Journal}, pages 1265--1304, 2003.

\bibitem{bazaliy2003free}
B.~V. Bazaliy and A.~Friedman.
\newblock A free boundary problem for an elliptic--parabolic system:
  application to a model of tumor growth.
\newblock {\em Communications in Partial Differential Equations}, pages
  517--560, 2003.

\bibitem{bintz2020optimal}
J.~Bintz and S.~Lenhart.
\newblock Optimal resource allocation for a diffusive population model.
\newblock {\em Journal of Biological Systems}, 28(04):945--976, 2020.

\bibitem{byrne1995growth}
H.~M. Byrne and M.~A.~J. Chaplain.
\newblock Growth of nonnecrotic tumors in the presence and absence of
  inhibitors.
\newblock {\em Mathematical Biosciences}, 130(2):151--181, 1995.

\bibitem{calzada2013optimal}
M.~C. Calzada, E.~Fern{\'a}ndez-Cara, and M.~Mar{\'\i}n.
\newblock Optimal control oriented to therapy for a free-boundary tumor growth
  model.
\newblock {\em Journal of Theoretical Biology}, 325:1--11, 2013.

\bibitem{casas2017optimal}
E.~Casas and M.~Mateos.
\newblock Optimal control of partial differential equations.
\newblock {\em Computational Mathematics, Numerical Analysis and Applications:
  Lecture Notes of the XVII'Jacques-Louis Lions' Spanish-French School}, pages
  3--59, 2017.

\bibitem{cui2009well}
S.~Cui and J.~Escher.
\newblock Well-posedness and stability of a multi-dimensional tumor growth
  model.
\newblock {\em Archive for Rational Mechanics and Analysis}, 191:173--193,
  2009.

\bibitem{cui2001analysis}
S.~Cui and A.~Friedman.
\newblock Analysis of a mathematical model of the growth of necrotic tumors.
\newblock {\em Journal of Mathematical Analysis and Applications},
  255(2):636--677, 2001.

\bibitem{fontelos2003symmetry}
M.~A. Fontelos and A.~Friedman.
\newblock Symmetry-breaking bifurcations of free boundary problems in three
  dimensions.
\newblock {\em Asymptotic Analysis}, 35(3-4):187--206, 2003.

\bibitem{friedman1986optimal}
A.~Friedman.
\newblock Optimal control for variational inequalities.
\newblock {\em SIAM Journal on Control and Optimization}, 24(3):439--451, 1986.

\bibitem{friedman1987optimal}
A.~Friedman.
\newblock Optimal control for free boundary problems.
\newblock In {\em Control Problems for Systems Described by Partial
  Differential Equations and Applications: Proceedings of the IFIP-WG 7.2
  Working Conference Gainesville, Florida, February 3--6, 1986}, pages 56--63.
  Springer, 1987.

\bibitem{friedman2007mathematical}
A.~Friedman.
\newblock Mathematical analysis and challenges arising from models of tumor
  growth.
\newblock {\em Mathematical Models and Methods in Applied Sciences},
  17(supp01):1751--1772, 2007.

\bibitem{friedman2008multiscale}
A.~Friedman.
\newblock A multiscale tumor model.
\newblock {\em Interfaces and Free Boundaries}, 10(2):245--262, 2008.

\bibitem{friedman2009free}
A.~Friedman.
\newblock Free boundary problems associated with multiscale tumor models.
\newblock {\em Mathematical Modelling of Natural Phenomena}, 4(3):134--155,
  2009.

\bibitem{friedman2006asymptotic}
A.~Friedman and B.~Hu.
\newblock Asymptotic stability for a free boundary problem arising in a tumor
  model.
\newblock {\em Journal of Differential Equations}, 227(2):598--639, 2006.

\bibitem{friedman2006bifurcation}
A.~Friedman and B.~Hu.
\newblock Bifurcation from stability to instability for a free boundary problem
  arising in a tumor model.
\newblock {\em Archive for Rational Mechanics and Analysis}, 180:293--330,
  2006.

\bibitem{friedman2008stability}
A.~Friedman and B.~Hu.
\newblock Stability and instability of {Liapunov-Schmidt} and {Hopf}
  bifurcation for a free boundary problem arising in a tumor model.
\newblock {\em Transactions of the American Mathematical Society},
  360(10):5291--5342, 2008.

\bibitem{friedman2015analysis}
A.~Friedman and K.~Y. Lam.
\newblock Analysis of a free-boundary tumor model with angiogenesis.
\newblock {\em Journal of Differential Equations}, 259(12):7636--7661, 2015.

\bibitem{friedman1999analysis}
A.~Friedman and F.~Reitich.
\newblock Analysis of a mathematical model for the growth of tumors.
\newblock {\em Journal of Mathematical Biology}, 38:262--284, 1999.

\bibitem{friedman2001symmetry}
A.~Friedman and F.~Reitich.
\newblock Symmetry-breaking bifurcation of analytic solutions to free boundary
  problems: an application to a model of tumor growth.
\newblock {\em Transactions of the American Mathematical Society},
  353(4):1587--1634, 2001.

\bibitem{hackbusch1978numerical}
W.~Hackbusch.
\newblock A numerical method for solving parabolic equations with opposite
  orientations.
\newblock {\em Computing}, 20(3):229--240, 1978.

\bibitem{hao2012continuation}
W.~Hao, J.~D. Hauenstein, B.~Hu, Y.~Liu, A.~J. Sommese, and Y.~T. Zhang.
\newblock Continuation along bifurcation branches for a tumor model with a
  necrotic core.
\newblock {\em Journal of Scientific Computing}, 53:395--413, 2012.

\bibitem{he2022linear}
W.~He, R.~Xing, and B.~Hu.
\newblock Linear stability analysis for a free boundary problem modeling
  multilayer tumor growth with time delay.
\newblock {\em SIAM Journal on Mathematical Analysis}, 54(4):4238--4276, 2022.

\bibitem{hinze2007optimal}
M.~Hinze and S.~Ziegenbalg.
\newblock Optimal control of the free boundary in a two-phase {Stefan} problem.
\newblock {\em Journal of Computational Physics}, 223(2):657--684, 2007.

\bibitem{hu2011blow}
B.~Hu.
\newblock {\em Blow-up theories for semilinear parabolic equations}.
\newblock Springer, 2011.

\bibitem{huang2017bifurcation}
Y.~Huang, Z.~Zhang, and B.~Hu.
\newblock Bifurcation for a free-boundary tumor model with angiogenesis.
\newblock {\em Nonlinear Analysis: Real World Applications}, 35:483--502, 2017.

\bibitem{huang2021asymptotic}
Y.~Huang, Z.~Zhang, and B.~Hu.
\newblock Asymptotic stability for a free boundary tumor model with
  angiogenesis.
\newblock {\em Journal of Differential Equations}, 270:961--993, 2021.

\bibitem{jung2002optimal}
E.~Jung, S.~Lenhart, and Z.~Feng.
\newblock Optimal control of treatments in a two-strain tuberculosis model.
\newblock {\em Discrete and Continuous Dynamical Systems Series B},
  2(4):473--482, 2002.

\bibitem{kim2004three}
J.~B. Kim, R.~Stein, and M.~J.~O'Hare ~.
\newblock Three-dimensional in vitro tissue culture models of breast cancer—a
  review.
\newblock {\em Breast Cancer Research and Treatment}, 85:281--291, 2004.

\bibitem{kim2017mathematical}
S.~Kim, J.~Lee, and E.~Jung.
\newblock Mathematical model of transmission dynamics and optimal control
  strategies for 2009 {A/H1N1} influenza in the republic of {Korea}.
\newblock {\em Journal of Theoretical Biology}, 412:74--85, 2017.

\bibitem{kirschner1997optimal}
D.~Kirschner, S.~Lenhart, and S.~Serbin.
\newblock Optimal control of the chemotherapy of {HIV}.
\newblock {\em Journal of Mathematical Biology}, 35:775--792, 1997.

\bibitem{kyle1999characterization}
A.~H. Kyle, C.~T. Chan, and A.~I. Minchinton.
\newblock Characterization of three-dimensional tissue cultures using
  electrical impedance spectroscopy.
\newblock {\em Biophysical Journal}, 76(5):2640--2648, 1999.

\bibitem{ledzewicz2019optimal}
U.~Ledzewicz, H.~Maurer, and H.~Sch{\"a}ttler.
\newblock Optimal combined radio-and anti-angiogenic cancer therapy.
\newblock {\em Journal of Optimization Theory and Applications}, 180:321--340,
  2019.

\bibitem{lenhart2007optimal}
S.~Lenhart and J.~T. Workman.
\newblock {\em Optimal control applied to biological models}.
\newblock Chapman and Hall/CRC, 2007.

\bibitem{li2012optimal}
X.~Li and J.~Yong.
\newblock {\em Optimal control theory for infinite dimensional systems}.
\newblock Springer Science \& Business Media, 2012.

\bibitem{lions1971optimal}
J.~L. Lions.
\newblock {\em Optimal control of systems governed by partial differential
  equations}, volume 170.
\newblock Springer, 1971.

\bibitem{lu2023bifurcation}
M.~J. Lu, W.~Hao, B.~Hu, and S.~Li.
\newblock Bifurcation analysis of a free boundary model of vascular tumor
  growth with a necrotic core and chemotaxis.
\newblock {\em Journal of Mathematical Biology}, 86(1):19, 2023.

\bibitem{lu2022nonlinear}
M.~J. Lu, W.~Hao, C.~Liu, J.~Lowengrub, and S.~Li.
\newblock Nonlinear simulation of vascular tumor growth with chemotaxis and the
  control of necrosis.
\newblock {\em Journal of Computational Physics}, 459:111153, 2022.

\bibitem{miller2010modeling}
R.~L. Miller~Neilan, E.~Schaefer, H.~Gaff, K.~R. Fister, and S.~Lenhart.
\newblock Modeling optimal intervention strategies for cholera.
\newblock {\em Bulletin of Mathematical Biology}, 72:2004--2018, 2010.

\bibitem{mueller1997three}
W.~Mueller-Klieser.
\newblock Three-dimensional cell cultures: from molecular mechanisms to
  clinical applications.
\newblock {\em American Journal of Physiology-Cell Physiology},
  273(4):C1109--C1123, 1997.

\bibitem{schattler2015optimal}
H.~Sch{\"a}ttler and U.~Ledzewicz.
\newblock Optimal control for mathematical models of cancer therapies.
\newblock {\em An Application of Geometric Methods}, 2015.

\bibitem{selwan2016attacking}
E.~M. Selwan, B.~T. Finicle, S.~M. Kim, and A.~L. Edinger.
\newblock Attacking the supply wagons to starve cancer cells to death.
\newblock {\em FEBS Letters}, 590(7):885--907, 2016.

\bibitem{valega2023resource}
W.~Valega-Mackenzie, J.~Bintz, and S.~Lenhart.
\newblock Resource allocation in a {PDE} ecosystem model.
\newblock {\em Journal of Mathematical Biology}, 86(6):96, 2023.

\bibitem{wang2014bifurcation}
Z.~Wang.
\newblock Bifurcation for a free boundary problem modeling tumor growth with
  inhibitors.
\newblock {\em Nonlinear Analysis: Real World Applications}, 19:45--53, 2014.

\bibitem{wu2012bifurcation}
J.~Wu and F.~Zhou.
\newblock Bifurcation analysis of a free boundary problem modelling tumour
  growth under the action of inhibitors.
\newblock {\em Nonlinearity}, 25(10):2971, 2012.

\bibitem{yousefnezhad2021optimal}
M.~Yousefnezhad, C.~Y. Kao, and S.~A. Mohammadi.
\newblock Optimal chemotherapy for brain tumor growth in a reaction-diffusion
  model.
\newblock {\em SIAM Journal on Applied Mathematics}, 81(3):1077--1097, 2021.

\bibitem{yu2019advances}
S.~Yu, Z.~Chen, X.~Zeng, X.~Chen, and Z.~Gu.
\newblock Advances in nanomedicine for cancer starvation therapy.
\newblock {\em Theranostics}, 9(26):8026, 2019.

\bibitem{zhao2020impact}
X.~E. Zhao and B.~Hu.
\newblock The impact of time delay in a tumor model.
\newblock {\em Nonlinear Analysis: Real World Applications}, 51:103015, 2020.

\bibitem{zhao2020symmetry}
X.~E. Zhao and B.~Hu.
\newblock Symmetry-breaking bifurcation for a free-boundary tumor model with
  time delay.
\newblock {\em Journal of Differential Equations}, 269(3):1829--1862, 2020.

\bibitem{zhou2008bifurcation}
F.~Zhou, J.~Escher, and S.~Cui.
\newblock Bifurcation for a free boundary problem with surface tension modeling
  the growth of multi-layer tumors.
\newblock {\em Journal of Mathematical Analysis and Applications},
  337(1):443--457, 2008.

\bibitem{zhou2022effect}
H.~Zhou, H.~Song, and Z.~Wang.
\newblock The effect of time delay in regulatory apoptosis on a tumor model
  with angiogenesis.
\newblock {\em Chaos, Solitons \& Fractals}, 160:112219, 2022.

\end{thebibliography}

\end{document}